\tikzset{
>=stealth',
help lines/.style={dashed, thick},
axis/.style={<->},
important line/.style={thick},
connection/.style={thick, dotted},
}
\newcommand{\nc}{\newcommand}
\nc{\rnc}{\renewcommand}
\nc{\bb}[1]{{\mathbb #1}}
\nc{\bbA}{\bb{A}}\nc{\bbB}{\bb{B}}\nc{\bbC}{\bb{C}}\nc{\bbD}{\bb{D}}
\nc{\bbE}{\bb{E}}\nc{\bbF}{\bb{F}}\nc{\bbG}{\bb{G}}\nc{\bbH}{\bb{H}}
\nc{\bbI}{\bb{I}}\nc{\bbJ}{\bb{J}}\nc{\bbK}{\bb{K}}\nc{\bbL}{\bb{L}}
\nc{\bbM}{\bb{M}}\nc{\bbN}{\bb{N}}\nc{\bbO}{\bb{O}}\nc{\bbP}{\bb{P}}
\nc{\bbQ}{\bb{Q}}\nc{\bbR}{\bb{R}}\nc{\bbS}{\bb{S}}\nc{\bbT}{\bb{T}}
\nc{\bbU}{\bb{U}}\nc{\bbV}{\bb{V}}\nc{\bbW}{\bb{W}}\nc{\bbX}{\bb{X}}
\nc{\bbY}{\bb{Y}}\nc{\bbZ}{\bb{Z}}
\nc{\mbf}[1]{{\mathbf #1}}
\nc{\bfA}{\mbf{A}}\nc{\bfB}{\mbf{B}}\nc{\bfC}{\mbf{C}}\nc{\bfD}{\mbf{D}}
\nc{\bfE}{\mbf{E}}\nc{\bfF}{\mbf{F}}\nc{\bfG}{\mbf{G}}\nc{\bfH}{\mbf{H}}
\nc{\bfI}{\mbf{I}}\nc{\bfJ}{\mbf{J}}\nc{\bfK}{\mbf{K}}\nc{\bfL}{\mbf{L}}
\nc{\bfM}{\mbf{M}}\nc{\bfN}{\mbf{N}}\nc{\bfO}{\mbf{O}}\nc{\bfP}{\mbf{P}}
\nc{\bfQ}{\mbf{Q}}\nc{\bfR}{\mbf{R}}\nc{\bfS}{\mbf{S}}\nc{\bfT}{\mbf{T}}
\nc{\bfU}{\mbf{U}}\nc{\bfV}{\mbf{V}}\nc{\bfW}{\mbf{W}}\nc{\bfX}{\mbf{X}}
\nc{\bfY}{\mbf{Y}}\nc{\bfZ}{\mbf{Z}}
\nc{\bfa}{\mbf{a}}\nc{\bfb}{\mbf{b}}\nc{\bfc}{\mbf{c}}\nc{\bfd}{\mbf{d}}
\nc{\bfe}{\mbf{e}}\nc{\bff}{\mbf{f}}\nc{\bfg}{\mbf{g}}\nc{\bfh}{\mbf{h}}
\nc{\bfi}{\mbf{i}}\nc{\bfj}{\mbf{j}}\nc{\bfk}{\mbf{k}}\nc{\bfl}{\mbf{l}}
\nc{\bfm}{\mbf{m}}\nc{\bfn}{\mbf{n}}\nc{\bfo}{\mbf{o}}\nc{\bfp}{\mbf{p}}
\nc{\bfq}{\mbf{q}}\nc{\bfr}{\mbf{r}}\nc{\bfs}{\mbf{s}}\nc{\bft}{\mbf{t}}
\nc{\bfu}{\mbf{u}}\nc{\bfv}{\mbf{v}}\nc{\bfw}{\mbf{w}}\nc{\bfx}{\mbf{x}}
\nc{\bfy}{\mbf{y}}\nc{\bfz}{\mbf{z}}
\nc{\mcal}[1]{{\mathcal #1}}
\nc{\calA}{\mcal{A}}\nc{\calB}{\mcal{B}}\nc{\calC}{\mcal{C}}\nc{\calD}{\mcal{D}}
\nc{\calE}{\mcal{E}} \nc{\calF}{\mcal{F}}\nc{\calG}{\mcal{G}}\nc{\calH}{\mcal{H}}
\nc{\calI}{\mcal{I}}\nc{\calJ}{\mcal{J}}\nc{\calK}{\mcal{K}}\nc{\calL}{\mcal{L}}
\nc{\calM}{\mcal{M}}\nc{\calN}{\mcal{N}}\nc{\calO}{\mcal{O}}\nc{\calP}{\mcal{P}}
\nc{\calQ}{\mcal{Q}}\nc{\calR}{\mcal{R}}\nc{\calS}{\mcal{S}}\nc{\calT}{\mcal{T}}
\nc{\calU}{\mcal{U}}\nc{\calV}{\mcal{V}}\nc{\calW}{\mcal{W}}\nc{\calX}{\mcal{X}}
\nc{\calY}{\mcal{Y}}\nc{\calZ}{\mcal{Z}}
\nc{\fA}{\frak{A}}\nc{\fB}{\frak{B}}\nc{\fC}{\frak{C}} \nc{\fD}{\frak{D}}
\nc{\fE}{\frak{E}}\nc{\fF}{\frak{F}}\nc{\fG}{\frak{G}}\nc{\fH}{\frak{H}}
\nc{\fI}{\frak{I}}\nc{\fJ}{\frak{J}}\nc{\fK}{\frak{K}}\nc{\fL}{\frak{L}}
\nc{\fM}{\frak{M}}\nc{\fN}{\frak{N}}\nc{\fO}{\frak{O}}\nc{\fP}{\frak{P}}
\nc{\fQ}{\frak{Q}}\nc{\fR}{\frak{R}}\nc{\fS}{\frak{S}}\nc{\fT}{\frak{T}}
\nc{\fU}{\frak{U}}\nc{\fV}{\frak{V}}\nc{\fW}{\frak{W}}\nc{\fX}{\frak{X}}
\nc{\fY}{\frak{Y}}\nc{\fZ}{\frak{Z}}
\nc{\fa}{\frak{a}}\nc{\fb}{\frak{b}}\nc{\fc}{\frak{c}} \nc{\fd}{\frak{d}}
\nc{\fe}{\frak{e}}\nc{\fFf}{\frak{f}}\nc{\fg}{\frak{g}}\nc{\fh}{\frak{h}}
\nc{\fri}{\frak{i}}\nc{\fj}{\frak{j}}\nc{\fk}{\frak{k}}\nc{\fl}{\frak{l}}
\nc{\fm}{\frak{m}}\nc{\fn}{\frak{n}}\nc{\fo}{\frak{o}}\nc{\fp}{\frak{p}}
\nc{\fq}{\frak{q}}\nc{\fr}{\frak{r}}\nc{\fs}{\frak{s}}\nc{\ft}{\frak{t}}
\nc{\fu}{\frak{u}}\nc{\fv}{\frak{v}}\nc{\fw}{\frak{w}}\nc{\fx}{\frak{x}}
\nc{\fy}{\frak{y}}\nc{\fz}{\frak{z}}
\newtheorem{theorem}{Theorem}[section]
\newtheorem{lemma}[theorem]{Lemma}
\newtheorem{corollary}[theorem]{Corollary}
\theoremstyle{definition}
\newtheorem{definition}[theorem]{Definition}
\newtheorem{example}[theorem]{Example}
\theoremstyle{remark}
\newtheorem{remark}[theorem]{Remark}
\numberwithin{equation}{section}
\DeclareMathOperator{\im}{im}
\DeclareMathOperator{\Hom}{{Hom}}
\DeclareMathOperator{\Ell}{\mathcal{E}}
\DeclareMathOperator{\Pic}{Pic}
\newcommand{\cQ}{\mathcal {Q}}
\newcommand{\catA}{\mathfrak{A}}
\newcommand{\al}{\alpha}
\newcommand{\la}{\lambda}
\newcommand{\de}{\delta}
\newcommand{\dyn}{{\operatorname{d}}}
\DeclareMathOperator{\Kr}{{{Kr}}}
\newcommand{\unit}{\mathbf{1}}
\newcommand{\ep}{\epsilon}
 \gdef\Young(#1){\hbox{$\vcenter
 {\mathcode`,="8000\mathcode`|="8000
  \def,{\global\advance\cols by 1 &}%
  \def|{\cr
        \multispan{\the\cols}\hrulefill\cr
        &\global\cols=2 }%
  \offinterlineskip\everycr{}\tabskip=0pt
  \dimen0=\ht\strutbox \advance\dimen0 by \dp\strutbox
  \halign
   {\vrule height \ht\strutbox depth \dp\strutbox##
    &&\hbox to \dimen0{\hss$##$\hss}\vrule\cr
    \noalign{\hrule}&\global\cols=2 #1\crcr
    \multispan{\the\cols}\hrulefill\cr%
   }
 }$}}
\title[elliptic Schubert classes]
{Elliptic Schubert Classes and the Poincar\'e Duality}
\author{Changlong Zhong	}
\address{Hudson 209, 1400 Washington Ave, Albany, NY 12222}
\curraddr{}
\email{czhong@albany.edu}
\thanks{The author would like to thank C. Lenart and G. Zhao for the collaboration on elliptic Schubert classes, which lead to the present paper. He would also like to thank Rui Xiong for helpful discussions, and  the Institut des Hautes \'Etudes Scientifiques for its hospitality and support during his visit in July 2024. }
\date{}
\begin{document}
\maketitle
\begin{abstract}In this expository note, by using the Kostant-Kumar method,  we prove the Poincar\'e duality of the elliptic  classes associated to  Schubert varieties . 
\end{abstract}

\section{Introduction
}Equivariant elliptic cohomology has become a significant area of study in topology, with connections to mathematical physics, integrable systems, representation theory, and Schubert calculus. It complements rich theories like equivariant cohomology and equivariant K-theory. Recent progress has been made in the equivariant elliptic cohomology of symplectic resolutions, starting with \cite{AO21}. For example, the elliptic stable envelope has been studied and shown to have a close connection to 3d mirror symmetry in physics. In algebraic geometry, elliptic Schubert classes associated with Schubert varieties are defined in \cite{RW19, KRW20}. Indeed, it is known that the elliptic fundamental classes of Schubert varieties are not well defined due to the singularties. However, following Borisov-Libgober's work, Rim\'anyi-Weber were able to define these classes after introducing a new parameter known as the dynamical or K\"ahler parameter.  This parameter emerges from the action of the Weyl group of the Langlands dual root system. For example, it has been shown that the classical elliptic Demazure-Lusztig operators (without the dynamical parameter) do not satisfy the braid relations, whereas the modified operators, which include the dynamical parameter, do. This result is established through geometric methods in \cite{RW19}.

In \cite{RW19, LZZ23}, elliptic Schubert classes are defined and shown to be permuted by the elliptic Demazure-Lusztig operators. A natural next question is to find a Billey-AJS type formula for the restriction coefficients, along with the combinatorial properties of the elliptic Schubert classes. This will be the subject of joint work by the author, Lenart, and Xiong.

The torus fixed point method has been a key tool in studying equivariant cohomology theories. One of the most important approaches using this method is the one developed in Kostant and Kumar's groundbreaking work from 1986 and 1990 \cite{KK86, KK90}. In this paper, we develop the elliptic Schubert classes using the Kostant-Kumar method, along with the axiomatic approach introduced in \cite{CZZ15}. In simple terms, this method begins by studying the elliptic Demazure-Lusztig operators. The elliptic Schubert classes are then defined as the classes obtained by applying these operators.

Our main results include Theorems \ref{thm:mat} and \ref{thm:poincare}. The first theorem examines the transition matrix between the (dynamical) Weyl group and the elliptic DL operators, while the second proves the Poincar\'e duality between the elliptic Schubert classes and their opposite counterparts. Although a similar result was obtained in \cite{LZZ23} using a more conceptual approach, the method in this paper is more algebraic and concrete. This makes it a more practical framework for applications in combinatorics.

The note is organized as follows: In Section 1, we define the elliptic twisted group algebra and the elliptic DL operators, and examine their transition matrix under the Weyl group action. In Section 2, we define the dual of the elliptic twisted group algebra, an auxiliary object that still captures the dynamical Weyl group. Finally, in Section 3, we define the elliptic Schubert classes and show that they are Poincar\'e dual to their opposite counterparts.

\section{The elliptic Demazure-Lusztig operators}
In this section, we define the elliptic twisted group algebra together with    the elliptic DL operators.
\subsection{Elliptic curve}
Let $E$ be an elliptic curve over $\bbC$. We have isomorphisms 
\[
E\cong \bbC/(\bbZ+\bbZ\tau)\cong \bbC^*/q^\bbZ, 
\]
where $\tau\in \bbC$ such that $\im \tau>0$, and $q=e^{2\pi i\tau}$. Recall the Jacobi-Theta function
\[\vartheta(u)=\frac{1}{2\pi i}(u^{1/2}-u^{-1/2})\prod_{s>0}(1-q^su)(1-q^su^{-1})\cdot \prod_{s>0}(1-q^s)^{-2}, \quad u\in \bbC^*.\]
Since $|q|<1$, this series converges and defines a holomorphic function on a double cover of $\bbC^*$ and  vanishes of order 1 at $q^{\bbZ}$. Write $\theta(x)=\vartheta(e^{2\pi i x})$ with $x\in \bbC$.

Let $G$ be a reductive group with a Borel subgroup $B$ and a maximal torus $T$.
Let $\bbX^*(T)$ (resp. $\bbX_*(T)$) be the group of characters (resp. cocharacters) of the torus $T$. Let $\Sigma\subset \bbX^*(T)$ be the set of roots, $\Pi=\{\al_1,...,\al_n\}$ be the set of simple roots, and $W$ be the Weyl group. 
 Define $\catA=\bbX_*(T)\otimes E$, and $\catA^\vee=\bbX^*(T)\otimes E\cong \Pic^0(\catA)$, which is the dual abelian variety of $\catA$. 
Each $\mu\in \bbX^*(T)$ defines a map $\chi_\mu:\catA\to E$. Denote $z_\mu=\chi_\mu(z), z\in \catA$. Dually, each $\mu^\vee\in \bbX_*(T)$ defines a map $\chi_{\mu^\vee}:\catA^\vee\to E$, and denote $\la_{\mu^\vee}=\chi_{\mu^\vee}(\la), \la\in \catA^\vee$. 
Note that 
\[
\bbX_*(T)\otimes \bbC\cong \bbC^n, ~\bbX^*(T)\otimes \bbC\cong \bbC^n.
\]  and $\theta(z_\mu), \theta(\la_{\mu^\vee})$ are considered as meromorphic functions on complex spaces.

\subsection{The twisted group algebra}
We fix $\hbar\in \bbC$. 
Let $\calQ$ be the  field of  meromorphic functions on 
\[\bbX_*(T)\otimes \bbC\times \bbX^*(T)\otimes \bbC\cong \bbC^n\times \bbC^n.\]
The actions of $W$ on $\bbX_*(T)$ and on $\bbX^*(T)$ induce two commuting actions on $\cQ$, denoted by $W$ and $W^\dyn$, respectively. That is, $W$ acts on $z\in \catA$  and $W^\dyn$ acts on $\la\in \catA^\vee$. $W^\dyn$ is usually referred as the dynamical action, which is also called the K\"ahler action. 
Let   $\calQ_{W^{\dyn}}$ be the twisted product of $\cQ$ with $W^\dyn$, and $\calQ_{W\times W^{\dyn}}$ be the twisted product of  $\calQ_{W^{\dyn}}$ and $W$.
Then $\cQ_{W^{\dyn}}$ is a free left $\calQ$-module with basis $\{\de_w^{\dyn}|w\in W\}$, and $\cQ_{W\times W^{\dyn}}$ is a free left  $\cQ_{W^{\dyn}}$-module with basis $\{\de_w, w\in W\}$.  The product in $\cQ_{W^{\dyn}\times W}$  can be written as 
\begin{equation}\label{eq:twistedproduct}
\de_w\de_v^{\dyn}=\de_v^{\dyn}\de_w, \quad (a\de_w^{\dyn}\de_{v})( a'\de_{w'}^{\dyn}\de_{v'})=a\cdot{}^{vw^{\dyn}}a'\de_{ww'}^{\dyn}\de_{vv'}, \quad w,v,w', v'\in W, a,a'\in \cQ.
\end{equation}
Denote $\de_\al=\de_{s_\al}$.

We fix some notations. Let $\Sigma_w=\{\al>0|w^{-1}\al<0\}$ be the left inversion set of $w$. Denote 
\[
\theta_\Pi(\hbar\pm z)=\prod_{\al>0}\theta(\hbar\pm z_\al), \quad \theta_\Pi(\hbar\pm \la)=\prod_{\al>0}\theta(\hbar\pm\la_{\al^\vee}), \quad \theta_\Pi(z)=\prod_{\al>0}\theta(z_\al), \quad \theta_\Pi(\la)=\prod_{\al>0}\theta(\la_{\al^\vee}). 
\]
Denote $\epsilon_w=(-1)^{\ell(w)}$. Note that 
\[{}^w\theta_\Pi(z)=\epsilon_w\theta_\Pi(z), ~{}^{w^{\dyn}}\theta_\Pi(\la)=\epsilon_w\theta_\Pi(\la).\]
 Let $w_0\in W$ be the longest element.   Denote \begin{align*}\bfg=\prod_{\al>0}\frac{\theta(\hbar-z_\al)}{\theta(z_\al)}=\frac{\theta_\Pi(\hbar-z)}{\theta_\Pi(z)},&& ~\bfh=\prod_{\al>0}\frac{\theta(\hbar-\la_{\al^\vee})}{\theta(\la_{\al^\vee})}=\frac{\theta_\Pi(\hbar-\la)}{\theta_\Pi(\la)}.
\end{align*}
It is easy to see that 
\begin{align*}
{}^{w_0}\bfg=\ep_{w_0}\frac{\theta_\Pi(\hbar+z)}{\theta_\Pi(z)}, && {}^{w_0^\dyn}\bfh=\ep_{w_0}\frac{\theta_\Pi(\hbar+\la)}{\theta_\Pi(\la)}. 
\end{align*}

\subsection{The elliptic DL operators }
Let $z\in \catA, \la\in \catA^\vee$. For any simple root $\al$, define the elliptic Demazure-Lusztig (DL) operator
\[
T_{\al}^{ \la}=\frac{\theta(\hbar-z_\al)\theta(\la_{\al^\vee})}{\theta(z_\al)\theta(\hbar-\la_{\al^\vee})}\de_\al \de_\al^{\dyn}+\frac{\theta(\hbar)\theta(z_\al-\la_{\al^\vee})}{\theta(z_\al)\theta(\hbar-\la_{\al^\vee})}\de_\al^{\dyn}\in \calQ_{W^{\dyn}\times W}.
\]
Since $(-\la)_{\al^\vee}=-\la_{\al^\vee}$, so 
\begin{equation}
T_{\al}^{ -\la}=-\frac{\theta(\hbar-z_\alpha)\theta(\la_{\al^\vee})}{\theta(z_\al)\theta(\hbar+\la_{\al^\vee})}\de_\al\de_\al^{\dyn}+\frac{\theta(\hbar)\theta(z_\al+\la_{\al^\vee})}{\theta(z_\al)\theta(\hbar+\la_{\al^\vee})}\de_\al^{\dyn}, 
\end{equation}
and it also 
satisfies that $T_{\al}^{ \la}=\de_\al^{\dyn}T_{\al}^{ -\la}\de_\al^{\dyn}$. 
Furthermore, there is  an anti-involution that switches the two elements as well. See \eqref{eq:invT} below. 
It is easy to get that 
\[
\de_\al=-\frac{\theta(z_\al)\theta(\hbar+\la_{\al^\vee})}{\theta(\hbar-z_\al)\theta(\la_{\al^\vee})}\de_\al^\dyn T_\al^\la+\frac{\theta(\hbar)\theta(z_\al+\la_{\al^\vee})}{\theta(\hbar-z_\al)\theta(\la_{\al^\vee})}. 
\]
For simplicity, we will denote 
\[
\de_i=\de_{\al_i}, ~\de_i^\dyn=\de_{\al_i}^\dyn, ~ T_i^{\pm \la}=T_{\al_i}^{\pm \la}. 
\]
For the remaining calculation, we sometimes write 
\begin{equation}
\label{eq:T}\de_i^\dyn T_i^\la=p_i\de_i+q_i, ~p_i, q_i\in \cQ. 
\end{equation}

\begin{remark}\label{rem:T}
Let $\calL_\al=T\times_B\bbC_{-\al}$ be the line bundle over $G/B$. If one identifies $c_1^{coh}(\calL_\al)=z_\al, -\ln h=\hbar, \ln h^{\al^\vee}=\la_{\al^\vee}$, then $-T_{\al}^{\la}$ corresponds to the operator in \cite[Theorem 1.3]{RW19}. The notion $T_\al^{\pm\la}$ was denoted by $T_\al^{z,\pm\la}$ in \cite{LZZ23}. 
\end{remark}

One can show that 
\begin{enumerate}
\item[(i)] $(T_{\al}^{\la})^2=1$. This can be proved by direct computations by using Fay's Trisecant Identity.
\item[(ii)] The braid relations are satisfied. This is proved in \cite{RW19} by using geometric arguments. 
Define $T_{w}^\la=T_{i_1}^\la\cdots T_{i_k}^{\la}$ if $w=s_{i_1}\cdots s_{i_k}$ is a reduced decomposition. Because the braid relations are satisfied, the definition of $T^{\la}_w$ does not depend on the choice of reduced decompositions.
\end{enumerate}

We develop some basic properties of the elliptic DL operators. 
\begin{lemma}\label{lem:1}For any $a\in \cQ$, we have the Bernstein relation 
\[T_{\al}^\la a={}^{s_\al s_\al^{\dyn}}aT_{\al}^\la+\frac{\theta(\hbar)\theta(z_\al-\la_{\al^\vee})}{\theta(z_\al)\theta(\hbar-\la_{\al^\vee})}\cdot {}^{s_\al^{\dyn}}\left(a-{}^{s_\al}a\right)\de_\al^{\dyn}. \]
\end{lemma}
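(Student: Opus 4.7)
The plan is to verify the Bernstein relation by direct computation, moving $a$ through the two terms of $T_\al^\la$ and then reorganizing the output so that $T_\al^\la$ reappears on the right.

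First I would write $T_\al^\la = A\,\de_\al\de_\al^\dyn + B\,\de_\al^\dyn$, where
\[
A = \frac{\theta(\hbar-z_\al)\theta(\la_{\al^\vee})}{\theta(z_\al)\theta(\hbar-\la_{\al^\vee})}, \qquad B = \frac{\theta(\hbar)\theta(z_\al-\la_{\al^\vee})}{\theta(z_\al)\theta(\hbar-\la_{\al^\vee})},
\]
since both $A$ and $B$ lie in $\cQ$ and thus commute with nothing in sight. The key formal tool is the product rule \eqref{eq:twistedproduct}, which gives the commutation laws $\de_\al\, a = {}^{s_\al}a\,\de_\al$, $\de_\al^\dyn a = {}^{s_\al^\dyn}a\,\de_\al^\dyn$, together with $\de_\al \de_\al^\dyn = \de_\al^\dyn \de_\al$ and the commutativity of the $W$- and $W^\dyn$-actions on $\cQ$.

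Next I would compute the left-hand side: moving $a$ past $\de_\al^\dyn$ and then past $\de_\al$ gives
\[
T_\al^\la a \;=\; A\cdot {}^{s_\al s_\al^\dyn}a\,\de_\al\de_\al^\dyn + B\cdot {}^{s_\al^\dyn}a\,\de_\al^\dyn.
\]
Then I would expand the first term on the proposed right-hand side of the lemma,
\[
{}^{s_\al s_\al^\dyn}a\cdot T_\al^\la \;=\; A\cdot {}^{s_\al s_\al^\dyn}a\,\de_\al\de_\al^\dyn + B\cdot {}^{s_\al s_\al^\dyn}a\,\de_\al^\dyn,
\]
and compare with the LHS. The $A$-terms match identically, and the discrepancy in the $B$-terms is
\[
B\bigl({}^{s_\al^\dyn}a - {}^{s_\al s_\al^\dyn}a\bigr)\de_\al^\dyn \;=\; B\cdot {}^{s_\al^\dyn}(a - {}^{s_\al}a)\,\de_\al^\dyn,
\]
where the last equality uses commutativity of the two actions so that ${}^{s_\al s_\al^\dyn} = {}^{s_\al^\dyn s_\al}$ and we may factor ${}^{s_\al^\dyn}$ outside. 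This is exactly the correction term displayed in the statement of the lemma.

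No step is really difficult here; the only mild obstacle is bookkeeping the order in which $a$ is moved past $\de_\al$ and $\de_\al^\dyn$, and in particular remembering that the two Weyl-group actions commute, so that the same superscript $s_\al s_\al^\dyn = s_\al^\dyn s_\al$ can be written either way without ambiguity. Once this is settled, the verification is a one-line matching of terms.
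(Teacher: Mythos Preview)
Your proof is correct and follows essentially the same direct computation as the paper's own proof. The only difference is notational: the paper writes $T_\al^\la = {}^{s_\al^\dyn}p_\al\,\de_\al\de_\al^\dyn + {}^{s_\al^\dyn}q_\al\,\de_\al^\dyn$ using the shorthand $\de_\al^\dyn T_\al^\la = p_\al\de_\al + q_\al$ from \eqref{eq:T}, whereas you name the coefficients $A,B$ explicitly; the term-by-term matching is otherwise identical.
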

\begin{proof}
Following the notation in \eqref{eq:T}, we have 
\begin{align*}
T_\al^\la a-{}^{s_\al s_\al^\dyn}aT_\al^\la&=({}^{s_\al^\dyn}p_\al\de_\al+{}^{s_\al^\dyn}q_\al)\de_\al^\dyn a-{}^{s_\al s_\al^\dyn}a({}^{s_\al^\dyn}p_\al\de_\al+{}^{s_\al^\dyn} q_\al)\de_\al^\dyn\\
&={}^{s_\al^\dyn}p_\al \cdot{}^{s_\al s_\al^\dyn}a\de_\al\de_\al^\dyn+{}^{s_\al^\dyn}q_\al\cdot {}^{s_\al^\dyn}a\de_\al^\dyn-{}^{s_\al s_\al^\dyn}a\cdot {}^{s_\al^\dyn}p_\al\de_\al\de_\al^\dyn-{}^{s_\al s_\al^\dyn}a\cdot {}^{s_\al^\dyn}q_\al \de_\al^\dyn\\
&={}^{s_\al^\dyn}q_\al \cdot {}^{s_\al^\dyn}(a-{}^{s_\al}a)\de_\al^\dyn\\
&=\frac{\theta(\hbar)\theta(z_\al-\la_{\al^\vee})}{\theta(z_\al)\theta(\hbar-\la_{\al^\vee})}\cdot {}^{s_\al^\dyn}(a-{}^{s_\al}a)\de_\al^\dyn. 
\end{align*}
\end{proof}
\begin{lemma}\label{lem:2}
For any two simple roots $\al_i, \al_j$, we have 
\[
\de_i^{\dyn}T_{i}^\la\de_j^{\dyn}T_j^\la=\frac{p_i}{{}^{s_j^\dyn}p_i}\de_{ji}^{\dyn}T_{ij}^\la+(q_i-\frac{p_i\cdot{}^{s_j^\dyn}q_i}{{}^{s_j^\dyn}p_i})\de_j^{\dyn}T_j^\la. 
\]
Here $\de_{ij}^\dyn=\de_{s_is_j}^\dyn$ and $T_{ij}^\la=T_{s_is_j}^\la$. 
\end{lemma}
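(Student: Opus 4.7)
The plan is a direct algebraic manipulation that exploits the commutation $\de_i\de_j^\dyn=\de_j^\dyn\de_i$ from \eqref{eq:twistedproduct} together with the compact form \eqref{eq:T}. The key idea is not to expand $T_j^\la$ but to use $\de_{ji}^\dyn T_{ij}^\la$ itself as a convenient substitute for the mixed monomial $\de_i\de_j^\dyn T_j^\la$ which appears naturally when one multiplies $(p_i\de_i+q_i)$ into $\de_j^\dyn T_j^\la$.

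First, I would observe from \eqref{eq:twistedproduct} that $\de_{ji}^\dyn=\de_j^\dyn\de_i^\dyn$, and since $T_{ij}^\la=T_i^\la T_j^\la$ by the braid-invariant definition, one has
\[
\de_{ji}^\dyn T_{ij}^\la=\de_j^\dyn(\de_i^\dyn T_i^\la)T_j^\la=\de_j^\dyn(p_i\de_i+q_i)T_j^\la
={}^{s_j^\dyn}p_i\,\de_j^\dyn\de_i T_j^\la+{}^{s_j^\dyn}q_i\,\de_j^\dyn T_j^\la.
\]
Now use $\de_j^\dyn\de_i=\de_i\de_j^\dyn$ to rewrite the first term as ${}^{s_j^\dyn}p_i\,\de_i\de_j^\dyn T_j^\la$, and solve for the mixed monomial:
\[
\de_i\de_j^\dyn T_j^\la=\frac{1}{{}^{s_j^\dyn}p_i}\Bigl(\de_{ji}^\dyn T_{ij}^\la-{}^{s_j^\dyn}q_i\,\de_j^\dyn T_j^\la\Bigr).
\]

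Next, I would expand the left-hand side using \eqref{eq:T}:
\[
\de_i^\dyn T_i^\la\de_j^\dyn T_j^\la=(p_i\de_i+q_i)\de_j^\dyn T_j^\la
=p_i\,\de_i\de_j^\dyn T_j^\la+q_i\,\de_j^\dyn T_j^\la,
\]
and substitute the identity for $\de_i\de_j^\dyn T_j^\la$ obtained above. Collecting the coefficient of $\de_j^\dyn T_j^\la$ produces exactly $q_i-\frac{p_i\cdot{}^{s_j^\dyn}q_i}{{}^{s_j^\dyn}p_i}$, while the coefficient of $\de_{ji}^\dyn T_{ij}^\la$ is $\frac{p_i}{{}^{s_j^\dyn}p_i}$, completing the proof.

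There is no real obstacle here; the only point that needs minor care is keeping the two commuting actions $W$ and $W^\dyn$ straight when moving $\de_j^\dyn$ past $p_i\in\cQ$ (which depends on both $z$ and $\la$ variables) versus moving $\de_j^\dyn$ past $\de_i$, for which \eqref{eq:twistedproduct} is the governing rule. The argument also implicitly requires ${}^{s_j^\dyn}p_i\neq 0$, which is immediate from the explicit formula for $p_i$ read off the definition of $T_i^\la$.
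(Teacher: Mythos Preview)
Your proof is correct and takes a somewhat different route from the paper. The paper expands both sides fully in the $\de_w$-basis by also substituting $\de_j^\dyn T_j^\la=p_j\de_j+q_j$, computes each of $\de_i^\dyn T_i^\la\de_j^\dyn T_j^\la$ and $\frac{p_i}{{}^{s_j^\dyn}p_i}\de_{ji}^\dyn T_{ij}^\la$ as linear combinations of $\de_{ij},\de_i,\de_j,1$, and then identifies their difference as $(q_i-\frac{p_i\cdot{}^{s_j^\dyn}q_i}{{}^{s_j^\dyn}p_i})(p_j\de_j+q_j)$. You instead keep $\de_j^\dyn T_j^\la$ intact as a single symbol, derive the auxiliary identity $\de_i\de_j^\dyn T_j^\la=\frac{1}{{}^{s_j^\dyn}p_i}(\de_{ji}^\dyn T_{ij}^\la-{}^{s_j^\dyn}q_i\,\de_j^\dyn T_j^\la)$ from the expansion of $\de_{ji}^\dyn T_{ij}^\la$, and substitute. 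Your approach is slightly cleaner in that it never introduces $p_j,q_j$ at all; the paper's approach is more brute-force but makes the coefficient comparison in the $\de_w$-basis completely explicit. Both rely on the same ingredients: the commutation $\de_i\de_j^\dyn=\de_j^\dyn\de_i$ and the twisted rule $\de_j^\dyn p_i={}^{s_j^\dyn}p_i\,\de_j^\dyn$.
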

\begin{proof}
Following the notation in \eqref{eq:T}, we have
\begin{align*}
\frac{p_i}{{}^{s_j^\dyn}p_i}\de_{ji}^\dyn T^\la_{ij}&=\frac{p_i}{{}^{s_j^\dyn}p_i}\de_{ji}^\dyn \de_i^\dyn(p_i\de_i+q_i)\de_j^\dyn(p_j\de_j+q_j)\\
&=\frac{p_i}{{}^{s_j^\dyn}p_i}({}^{s_j^\dyn}p_i\de_i+{}^{s_j^\dyn}q_i)(p_j\de_j+q_j)\\
&=\frac{p_i}{{}^{s_j^\dyn}p_i}({}^{s_j^\dyn}p_i\cdot {}^{s_i}p_j\de_{ij}+{}^{s_j^\dyn}q_i p_j\de_j+{}^{s_j^\dyn}p_i\cdot {}^{s_i}q_j\de_i+{}^{s_j^\dyn}q_iq_j)\\
&=p_i\cdot {}^{s_i}p_j\de_{ij}+\frac{p_i \cdot{}^{s_j^\dyn}q_i p_j}{{}^{s_j^\dyn}p_i}\de_j+p_i\cdot{}^{s_i}q_j\de_i+\frac{p_i\cdot{}^{s_j^\dyn}q_iq_j}{{}^{s_j^\dyn}p_i}\\
\de_i^\dyn T_i^\la\de_j^\dyn T_j^\la&= (p_i\de_i+q_i)(p_j\de_j+q_j)\\
&=p_i\cdot {}^{s_i}p_j\de_{ij}+q_ip_j\de_j+p_i\cdot{}^{s_i}q_j\de_i+q_iq_j\\
&=\frac{p_i}{{}^{s_j^\dyn}p_i}\de_{ji}^\dyn T_{ij}^\la+(q_i-\frac{p_i\cdot{}^{s_j^\dyn}q_i}{{}^{s_j^\dyn}p_i})(p_j\de_j+q_j)\\
&=\frac{p_i}{{}^{s_j^\dyn}p_i}\de_{ji}^\dyn T_{ij}^\la+(q_i-\frac{p_i\cdot{}^{s_j^\dyn}q_i}{{}^{s_j^\dyn}p_i})\de_j^\dyn T_j^\la.
\end{align*}
The lemma is proved. 
\end{proof}
\begin{theorem}\label{thm:mat}
\begin{enumerate}
\item[(a)] We have the following transition matrices:
\begin{equation}\label{eq:1}
T^{\la}_w=\sum_{v\le w}a^{ \la}_{w,v}\de_w^{\dyn}\de_v, \quad \de_w=\sum_{v\le w}b_{w,v}^{ \la}\de^{\dyn}_{v^{-1}}T_v^{ \la}, \quad a_{w,v}^\la, b_{w,v}^\la\in \cQ,
\end{equation}
where the coefficients $a^\la_{w,v}, b^{\la}_{w,v}$ satisfy 
\begin{equation}\label{eq:invmat}\sum_{v}a^{ \la}_{w,v}\cdot{}^{w^{\dyn}}b^{ \la}_{v,u}=\de^{\Kr}_{w,u}, \quad \sum_{v}b^{ \la}_{w,v}\cdot {}^{(v^{-1})^{\dyn}}a^{ \la}_{v,u}=\de^{\Kr}_{w,u}. 
\end{equation}
Here $\de^{\Kr}_{w,u}$ is the Kronecker symbol. Moreover, 
\begin{equation}\label{eq:3}
a^{\la}_{w,w}=\prod_{\al\in \Sigma_w}\frac{\theta(\hbar-z_\al)\theta(\la_{\al^\vee})}{\theta(z_\al)\theta(\hbar-\la_{\al^\vee})}, \quad b^\la_{w,w}=\prod_{\al\in \Sigma_w}\frac{\theta(z_\al)}{\theta(\hbar-z_\al)}(w^{-1})^{\dyn}(\frac{\theta(\hbar-\la_{\al^\vee})}{\theta(\la_{\al^\vee})}).
\end{equation}
\item[(b)] The set $\{T_w^\la~| ~w\in W\}$ is a basis of the left $\cQ_{W^{\dyn}}$-module $\cQ_{W^{\dyn}\times W}$. 
\end{enumerate}

\end{theorem}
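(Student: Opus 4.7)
The plan is to prove (a) by induction on $\ell(w)$ along a reduced decomposition $w = s_i w'$ with $\ell(w') = \ell(w) - 1$, after which (b) follows immediately: since $\cQ_{W^\dyn \times W}$ is already a free left $\cQ_{W^\dyn}$-module with basis $\{\de_v\}_{v\in W}$, the statement that $\{T_w^\la\}$ is another $\cQ_{W^\dyn}$-basis amounts to inverting the transition matrix $(a_{w,v}^\la \de_w^\dyn)_{w,v}$, which is automatic once triangularity in Bruhat order and invertibility of the diagonal entries $a_{w,w}^\la$ are established in (a).

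\textbf{Inductive step for the first expansion.} Using $\de_\al \de_\al^\dyn = \de_\al^\dyn \de_\al$ from \eqref{eq:twistedproduct}, first rewrite $T_\al^\la = p_\al' \de_\al^\dyn \de_\al + q_\al' \de_\al^\dyn$, where $p_\al' = \frac{\theta(\hbar-z_\al)\theta(\la_{\al^\vee})}{\theta(z_\al)\theta(\hbar-\la_{\al^\vee})}$. Writing $T_w^\la = T_i^\la T_{w'}^\la$ and substituting the inductive expansion of $T_{w'}^\la$, the product rule in \eqref{eq:twistedproduct} yields
\begin{equation*}
T_w^\la = \sum_{v' \le w'} \Bigl( p_i' \cdot {}^{s_i s_i^\dyn} a^\la_{w',v'} \cdot \de_w^\dyn \de_{s_i v'} \;+\; q_i' \cdot {}^{s_i^\dyn} a^\la_{w',v'} \cdot \de_w^\dyn \de_{v'} \Bigr).
\end{equation*}
The lifting property of Bruhat order (applied to $s_i w' > w'$) ensures both $v'$ and $s_i v'$ lie $\le w$, which gives the triangular support of $a_{w,v}^\la$. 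Isolating the coefficient of $\de_w^\dyn \de_w$ forces $v' = w'$ in the first sum and excludes the second entirely (since $v' \le w' < w$), producing the recursion $a_{w,w}^\la = p_i' \cdot {}^{s_i s_i^\dyn} a_{w',w'}^\la$. Because ${}^{s_i} z_\al = z_{s_i\al}$ and ${}^{s_i^\dyn}\la_{\al^\vee} = \la_{(s_i\al)^\vee}$, the twist ${}^{s_i s_i^\dyn}$ applied to $p_\beta'$ simply replaces $\beta$ by $s_i\beta$; iterating along the reduced word and using the classical identity $\Sigma_w = \{\al_i\} \sqcup s_i \Sigma_{w'}$ assembles the product formula in \eqref{eq:3} for $a_{w,w}^\la$ as a product over $\Sigma_w$.

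\textbf{Inversion, part (b), and the formula for $b_{w,w}^\la$.} With (a) in hand for the $a$-coefficients, the diagonal entries $a_{w,w}^\la \de_w^\dyn$ are units in $\cQ_{W^\dyn}$ (since $\cQ$ is a field and $\de_w^\dyn$ is invertible), so the triangular transition matrix is invertible over this noncommutative ring; this proves (b) and produces the dual expansion $\de_w = \sum_{v \le w} b_{w,v}^\la \de_{v^{-1}}^\dyn T_v^\la$ with $b_{w,v}^\la \in \cQ$ supported on $v \le w$, triangularity being preserved by inversion. Substituting each expansion into the other and commuting the $\cQ$-coefficients through the $\de^\dyn$-factors via \eqref{eq:twistedproduct} yields both identities of \eqref{eq:invmat}: the twist $w^\dyn$ in the first relation arises from $\de_w^\dyn$ acting on $b_{v,u}^\la$, while $(v^{-1})^\dyn$ in the second comes from $\de_{v^{-1}}^\dyn$ acting on $a_{v,u}^\la$. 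The diagonal case $a_{w,w}^\la \cdot {}^{w^\dyn} b_{w,w}^\la = 1$ of the first relation then gives $b_{w,w}^\la = {}^{(w^{-1})^\dyn}(a_{w,w}^\la)^{-1}$, which unwinds to the explicit formula in \eqref{eq:3}. The main obstacle is the careful bookkeeping of the commuting $W$- and $W^\dyn$-actions: one must verify that the twists produced by \eqref{eq:twistedproduct} at each inductive step align precisely with the reduced-word combinatorics of $\Sigma_w$, and that the noncommutative matrix inversion preserves triangularity with exactly the twists asserted in \eqref{eq:invmat}.
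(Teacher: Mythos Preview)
Your argument for the first expansion in \eqref{eq:1}, the leading coefficient $a^\la_{w,w}$, the inverse relations \eqref{eq:invmat}, and the deduction of $b^\la_{w,w}$ are all correct and match the paper's approach essentially verbatim (the paper expands the full product $T_{i_1}^\la\cdots T_{i_k}^\la$ and ``opens the brackets'', which is your induction unrolled).

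The one place where your route differs is the second expansion in \eqref{eq:1}. The paper obtains it constructively: starting from $\de_i=a_i+b_i\de_i^\dyn T_i^\la$ it uses Lemma~\ref{lem:1} (Bernstein relation) and Lemma~\ref{lem:2} to push coefficients past $T_i^\la$ and to rewrite $\de_i^\dyn T_i^\la\,\de_j^\dyn T_j^\la$ as a $\cQ$-combination of $\de^\dyn_{ji}T_{ij}^\la$ and $\de_j^\dyn T_j^\la$, then inducts on the length. You instead appeal to abstract matrix inversion. That shortcut is valid, but as stated your justification has a gap: inverting the triangular matrix $(a^\la_{w,v}\de_w^\dyn)$ over the noncommutative ring $\cQ_{W^\dyn}$ only tells you $\de_w=\sum_v B_{w,v}T_v^\la$ with $B_{w,v}\in\cQ_{W^\dyn}$. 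It does \emph{not} by itself force the special shape $B_{w,v}=b^\la_{w,v}\de_{v^{-1}}^\dyn$ with $b^\la_{w,v}\in\cQ$, which is what \eqref{eq:1} asserts. The missing observation is that
\[
\de_{v^{-1}}^\dyn T_v^\la \;=\; \sum_{u\le v}{}^{(v^{-1})^\dyn}a^\la_{v,u}\,\de_u
\]
lies in the $\cQ$-submodule $\bigoplus_u\cQ\,\de_u$; since the transition matrix $({}^{(v^{-1})^\dyn}a^\la_{v,u})$ is triangular over the \emph{field} $\cQ$ with nonzero diagonal, the set $\{\de_{v^{-1}}^\dyn T_v^\la\}$ is a $\cQ$-basis of this submodule, and inverting over $\cQ$ (not $\cQ_{W^\dyn}$) produces the claimed $b^\la_{w,v}\in\cQ$. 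Once you insert this sentence, your argument is complete and in fact bypasses the need for Lemmas~\ref{lem:1} and~\ref{lem:2} in this proof.
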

\begin{proof}
From 
\[
T_\al^\la={}^{s_\al^\dyn}p_\al \de_\al\de_\al^\dyn+{}^{s_\al^\dyn}q_\al\de_\al^\dyn,
\]
if $w=s_{i_1}s_{i_1}\cdots s_{i_k}$, we have 
\[
T_w^\la=T_{i_1}^\la\cdots T_{i_k}^\la=({}^{s_{i_1}^\dyn}p_{i_1} \de_{i_1}\de_{i_1}^\dyn+{}^{s_{i_1}^\dyn}q_{i_1}\de_{i_1}^\dyn)\cdots ({}^{s_{i_k}^\dyn}p_{i_k} \de_{i_k}\de_{i_k}^\dyn+{}^{s_{i_k}^\dyn}q_{i_k}\de_{i_k}^\dyn).
\] 
Opening the brackets, it is standard to obtain the formula in the first part of \eqref{eq:1}.  Moreover, the leading coefficient $a^\la_{w,w}$ is given  by 
\[
\prod_{\al\in \Sigma_w}\frac{\theta(\hbar-z_\al)\theta(\la_{\al^\vee})}{\theta(z_\al)\theta(\hbar-\la_{\al^\vee})}.
\]
This gives the first identity of \eqref{eq:3}. 

We consider the second part of \eqref{eq:1}, that is, we express $\de_v$ as a linear combination of $T_w^\la$.  Write
\[
\de_i=a_i+b_i\de_i^\dyn T^\la_i,~a_i, b_i\in \cQ. 
\]
From Lemmas \ref{lem:1} and \ref{lem:2}, we can write
\[ T_i^\la a={}^{s_is_i^\dyn }aT_i^\la+f_i(a)\de_i^\dyn, \quad \de_i^\dyn T_i^\la \de_j ^\dyn T_j^\la=f_{ij}\de_{ji}^\dyn T_{ji}^\la+g_{ij}\de_j^\dyn T_j^\la, ~f_i(a), f_{ij}, g_{ij}\in \cQ. 
\]
We have
\begin{align*}
\de_{ij}&=\de_i\de_j\\
&=(a_i+b_i\de_i^\dyn T_i^\la)(a_j+b_j\de_j^\dyn T_j^\la)\\
&=a_ia_j+a_i b_j\de_j^\dyn T_j^\la+b_i\de_i^\dyn T_i^\la a_j+b_i \de_i^\dyn T_i^\la b_j\de_j^\dyn T_j^\la\\
&=a_ia_j+a_ib_j\de_j^\dyn T_j^\la+b_i({}^{s_i}a_j\de_i^\dyn T_i^\la+{}^{s_i^\dyn}f_i(a_j))+b_i({}^{s_i}b_j\de_i^\dyn T_i^\la+{}^{s_i^\dyn}f_i(b_j)) \de_j^\dyn T_j^\la\\
&=a_ia_j+a_ib_j\de_j^\dyn T_j^\la+b_i({}^{s_i}a_j\de_i^\dyn T_i^\la+{}^{s_i^\dyn}f_i(a_j))+b_i\cdot {}^{s_i}b_j \de_i^\dyn T_i^\la\de_j^\dyn T_j^\la+b_i\cdot {}^{s_i^\dyn }f_i(b_j)\de_j^\dyn T_j^\la.
\end{align*}
By induction, the second expression in \eqref{eq:1} follows.

 Since the matrix $(a^\la_{w,v}\de_{w}^{\dyn})_{(w,v)\in W^2}$ is upper triangular with diagonal entries all invertible in $\cQ_{W^\dyn}$, so it is  invertible in $\cQ_{W^\dyn}$,  and part (b) then follows. 

We now consider \eqref{eq:invmat}. 
We have
\begin{align*}
T_w^\la=\sum_{v}a^\la_{w,v}\de_w^\dyn\de_v=\sum_va^\la_{w,v}\de_w^\dyn \sum_u b^\la_{v,u}\de_{u^{-1}}^\dyn T_u^\la=\sum_{u}(\sum_va^\la_{w,v}\cdot{}^{w^\dyn}b^\la_{v,u}\de_{wu^{-1}}^{\dyn})T_u^\la.
\end{align*} 
so 
\[
\de_{w,u}^{\Kr}=\sum_v a^\la_{w,v}\cdot {}^{w^\dyn}b^\la_{v,u}. 
\]
The second identity in \eqref{eq:invmat} can be proved easily. 

Lastly, we consider the second part of \eqref{eq:3}. From \eqref{eq:invmat}, we see that 
\[
b^\la_{w,w}\cdot {}^{(w^{-1})^\dyn}a^\la_{w,w}=1,
\]
so the expression for $b^\la_{w,w}$ follows. This concludes the proof of the theorem.
\end{proof}

As a special case, we have
\begin{align}\label{eq:abw0}
a^\la_{w_0, w_0}&=\frac{\theta_\Pi(\hbar-z)\theta_\Pi(\la)}{\theta_\Pi(z)\theta_\Pi(\hbar-\la)}=\frac{\bfg}{\bfh}, & b^\la_{w_0, w_0}&=\ep_{w_0}\frac{\theta_\Pi(z)\theta_\Pi(\hbar+\la)}{\theta_\Pi(\hbar-z)\theta_\Pi(\la)}=\frac{{}^{w_0^\dyn}\bfh}{\bfg},\\
 ~a^{-\la}_{w_0,w_0}&=\frac{\bfg}{{}^{w_0^\dyn}\bfh},& ~b^{-\la}_{w_0, w_0}&=\frac{\bfh}{\bfg}.
\end{align}
 
For the root system of type $A_2$, the `$a$'-coefficients are computed in \cite[Example 8.3]{LZZ23}.

\subsection{The anti-involution}
It turns out that one can relate $T_\al^{\la}$ and $T_\al^{-\la}$ by using an anti-involution as follows. We will also show that these two operators are adjoint to each other.
We define an anti-involution 
$\iota:\cQ_{W^{\dyn}\times W}\to \cQ_{W^{\dyn}\times W}$, 
\[ a\de_v^{\dyn}\de_w\mapsto \de_{v^{-1}}^{\dyn}\de_{w^{-1}} a\frac{{}^w\bfg}{\bfg}={}^{(v^{-1})^\dyn w^{-1}}a\cdot \frac{\bfg}{{}^{w^{-1}}\bfg}\de_{v^{-1}}^\dyn\de_{w^{-1}}.\]
This anti-involution was defined in \cite{LZZ23}. 
For any simple root $\alpha$, it is easy to see that 
\[
{}^{s_\al}\bfg=\bfg\cdot (-1)\cdot\frac{\theta(\hbar+z_\al)}{\theta(\hbar-z_\al)}, 
\]
so one can verify 
\begin{equation}\label{eq:invT}
\iota(T^\la_i)=T^{-\la}_i, \quad \iota(T^\la_w)=T^{-\la}_{w^{-1}}.
\end{equation}

The following lemma deals with the coefficients after applying the anti-involution. 
\begin{lemma}\label{lem:reverse}For any $w,v\in W$, we have 
\[
{}^v(\frac{a^\la_{w^{-1}, v^{-1}}}{\bfg})=\frac{{}^{(w^{-1})^\dyn}a^{-\la}_{w,v}}{\bfg}. 
\]
\end{lemma}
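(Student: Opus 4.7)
The plan is to derive Lemma \ref{lem:reverse} by applying the anti-involution $\iota$ to the expansion of $T_w^\la$ given in \eqref{eq:1}, and then matching coefficients in the basis $\{\de_{u}^\dyn \de_{x}\}$ of $\cQ_{W^\dyn \times W}$ over $\cQ$.

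First, I would start from
\[
T_w^\la = \sum_{v \le w} a^\la_{w,v}\, \de_w^\dyn \de_v
\]
and apply $\iota$ to both sides. The left-hand side becomes $T_{w^{-1}}^{-\la}$ by \eqref{eq:invT}. For the right-hand side, the definition of $\iota$ gives
\[
\iota\bigl(a^\la_{w,v}\, \de_w^\dyn \de_v\bigr) = {}^{(w^{-1})^\dyn v^{-1}} a^\la_{w,v} \cdot \frac{\bfg}{{}^{v^{-1}}\bfg}\, \de_{w^{-1}}^\dyn \de_{v^{-1}}.
\]
Comparing with the expansion $T_{w^{-1}}^{-\la} = \sum_{u} a^{-\la}_{w^{-1},u}\, \de_{w^{-1}}^\dyn \de_u$, the uniqueness of coordinates in the basis $\{\de_{w^{-1}}^\dyn \de_u\}_u$ gives the identity
\[
a^{-\la}_{w^{-1}, v^{-1}} = {}^{(w^{-1})^\dyn v^{-1}} a^\la_{w,v} \cdot \frac{\bfg}{{}^{v^{-1}}\bfg}.
\]

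Next, I would swap $w \leftrightarrow w^{-1}$ and $v \leftrightarrow v^{-1}$ to rewrite this as
\[
a^{-\la}_{w, v} = {}^{w^\dyn v} a^\la_{w^{-1}, v^{-1}} \cdot \frac{\bfg}{{}^{v}\bfg},
\]
then apply ${}^{(w^{-1})^\dyn}$ to both sides. Since $\bfg$ and ${}^v\bfg$ depend only on $z$ while $(w^{-1})^\dyn$ acts only on $\la$, they are fixed by $(w^{-1})^\dyn$; meanwhile $(w^{-1})^\dyn$ and $w^\dyn$ compose to the identity in $W^\dyn$, and they commute with the $W$-action of $v$. This yields
\[
{}^{(w^{-1})^\dyn} a^{-\la}_{w, v} = {}^v a^\la_{w^{-1}, v^{-1}} \cdot \frac{\bfg}{{}^{v}\bfg}.
\]
Dividing by $\bfg$ and pulling ${}^v$ outside delivers the claimed equality
\[
\frac{{}^{(w^{-1})^\dyn} a^{-\la}_{w, v}}{\bfg} = \frac{{}^v a^\la_{w^{-1}, v^{-1}}}{{}^v\bfg} = {}^v\!\left(\frac{a^\la_{w^{-1}, v^{-1}}}{\bfg}\right).
\]

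The main delicacies, rather than any substantive obstacle, will be bookkeeping: correctly applying the formula for $\iota$ on a product $a\,\de_v^\dyn \de_w$ (which introduces the factor $\bfg/{}^{w^{-1}}\bfg$ and reverses the order of the two actions), and keeping track of the commuting actions of $W$ and $W^\dyn$ on $\cQ$, in particular that $\bfg \in \cQ$ is $W^\dyn$-invariant. Once these are handled correctly, the proof reduces to the coefficient comparison sketched above.
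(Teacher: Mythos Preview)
Your proposal is correct and follows essentially the same approach as the paper: apply the anti-involution $\iota$ to the expansion \eqref{eq:1} and compare coefficients in the basis $\{\de_u^\dyn\de_x\}$. The only cosmetic difference is that the paper applies $\iota$ to $T_w^{-\la}$ (obtaining $T_{w^{-1}}^\la$) and reads off the identity directly, whereas you apply $\iota$ to $T_w^\la$ and then perform an extra relabeling $w\leftrightarrow w^{-1}$, $v\leftrightarrow v^{-1}$; both routes land on the same coefficient comparison.
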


\begin{proof}We have
\begin{align*}
\sum_{v}a_{w^{-1}, v^{-1}}^{ \la}\de_{w^{-1}}^{\dyn}\de_{v^{-1}}&=T_{w^{-1}}^{ \la}\\
&=\iota(T_w^{-\la})\\
&=\iota(\sum_va_{w,v}^{-\la}\de_w^{\dyn}\de_v)\\
&=\sum_v{}^{v^{-1}(w^{-1})^{\dyn}}a_{w,v}^{-\la}\cdot \frac{\bfg}{{}^{v^{-1}}\bfg}\de_{w^{-1}}^{\dyn}\de_{v^{-1}}.
\end{align*}
The formula then follows from the comparison of the coefficients of $\de_{w^{-1}}^{\dyn}\de_{v^{-1}}$. 
\end{proof}
\begin{example}
If $w=v=w_0$, then 
\[
{}^w(\frac{a^\la_{w^{-1}, w^{-1}}}{\bfg})={}^w(\frac{\bfg}{\bfh}\cdot \frac{1}{\bfg})=\frac{1}{\bfh}, ~
\frac{{}^{(w^{-1})^{\dyn}}a^{- \la}_{w,w}}{\bfg}={}^{(w_{0}^{-1})^\dyn}\left(\frac{\bfg}{{}^{w_0^\dyn}\bfh}\right)\frac{1}{\bfg}=\frac{1}{\bfh},
\]
so Lemma \ref{lem:reverse} is verified in this case. 
\end{example}

\section{The dual as a left  $\cQ_{W^{\dyn}}$-module}
In this section, we define a dual of the twisted group algebra $\cQ_{W^\dyn\times W}$. This serves as an intermediate step in the development of the algebraic model for the elliptic Schubert classes.

\subsection{}
We define the following dual:
\begin{equation}\label{eq:dualdef}
\cQ_{W^{\dyn}\times W}^\star=\Hom_{\cQ_{W^{\dyn}}}(\cQ_{W^{\dyn}\times W}, \cQ_{W^{\dyn}}),
\end{equation}
where $\cQ_{W^{\dyn}\times W}$ is considered as a free left $\cQ_{W^{\dyn}}$-module, with two bases $T_{w}^{ \la}$ and $\de_w$. Then $\cQ_{W^{\dyn}\times W}^\star$ becomes a right $\cQ_{W^{\dyn}}$-module  coming  from the right multiplication of $\cQ_{W^{\dyn}}$ on itself. That is, 
\[
(gp)(x)=g(x)p, \quad g\in \cQ_{W^\dyn\times W}^\star, x\in \cQ_{W^\dyn\times W}, p\in \cQ_{W^\dyn}. 
\]
So $\cQ_{W^{\dyn}\times W}^\star$ is a free right $\cQ_{W^{\dyn}}$-module with basis $g_w$ dual to the basis $\de_w \in \cQ_{W^{\dyn}\times W}$. We also denote by $(T_{w}^{ \la})^\star$  the dual basis of $T_{w}^{ \la}$. Following from  Theorem \ref{thm:mat}, we have 
\begin{align}\label{eq:dual}
(T_w^{ \la})^\star&=\sum_vg_v [(T^\la_w)^\star(\de_v)]=\sum_vg_v [(T^\la_w)^\star(\sum_ub^\la_{v,u}\de_{u^{-1}}^\dyn T_u^\la)]=\sum_{v}g_{v}b^{ \la}_{v,w}\de_{w^{-1}}^{\dyn}.
\end{align}

The right $\cQ_{W^{\dyn}}$-module  $\cQ_{W^{\dyn}\times W}^\star$ is also a  left $\cQ_{W^{\dyn}\times W}$-module induced by the right multiplication of $\cQ_{W^{\dyn}\times W}$ on itself. More precisely, we have
\[
(x\bullet g)(x')=g(x'x), \quad x,x'\in \cQ_{W^{\dyn}\times W},~ g\in \cQ_{W^{\dyn}\times W}^\star.
\]
\begin{lemma}The $\bullet$-action is right $\cQ_{W^{\dyn}}$-linear.
\end{lemma}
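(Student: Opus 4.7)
The plan is to unwind the two definitions and check the identity $x \bullet (gp) = (x \bullet g)\, p$ in $\cQ_{W^{\dyn}\times W}^\star$ pointwise, for arbitrary $x \in \cQ_{W^{\dyn}\times W}$, $g \in \cQ_{W^{\dyn}\times W}^\star$, and $p \in \cQ_{W^{\dyn}}$. First I would verify that the expressions on both sides actually live in $\cQ_{W^{\dyn}\times W}^\star$, i.e. are left $\cQ_{W^{\dyn}}$-linear: for $q \in \cQ_{W^{\dyn}}$ and $x' \in \cQ_{W^{\dyn}\times W}$, the computation $(x \bullet g)(qx') = g(qx' \cdot x) = q \cdot g(x'x) = q \cdot (x \bullet g)(x')$ uses only the left $\cQ_{W^{\dyn}}$-linearity of $g$ together with associativity of multiplication in $\cQ_{W^{\dyn}\times W}$, and the analogous check for $gp$ is $(gp)(qx') = g(qx')\, p = q\, g(x')\, p = q\, (gp)(x')$.

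With those preliminaries in place, the lemma reduces to a one-line comparison. Evaluating at $x' \in \cQ_{W^{\dyn}\times W}$,
\[
\bigl(x \bullet (gp)\bigr)(x') \;=\; (gp)(x'x) \;=\; g(x'x)\, p, \qquad \bigl((x \bullet g)\, p\bigr)(x') \;=\; (x \bullet g)(x')\, p \;=\; g(x'x)\, p,
\]
so the two functionals agree. The only subtlety worth flagging is that $\cQ_{W^{\dyn}}$ is noncommutative, so the right action by $p$ must stay on the right throughout; no reordering of $p$ past scalars appearing inside $g(x'x)$ is performed, and the identity holds purely by associativity of multiplication in $\cQ_{W^{\dyn}}$. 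Consequently there is no genuine obstacle, and the proof amounts to careful bookkeeping of the sides on which the various actions operate.
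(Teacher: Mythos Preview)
Your proof is correct and is essentially the same as the paper's: both reduce to the one-line computation $(x\bullet(gp))(x')=(gp)(x'x)=g(x'x)p=(x\bullet g)(x')p=((x\bullet g)p)(x')$. Your extra preliminary check that $x\bullet g$ remains left $\cQ_{W^{\dyn}}$-linear is a welcome bit of care, but the core argument is identical to the paper's, which also remarks conceptually that the $\bullet$-action comes from the domain while the right $\cQ_{W^{\dyn}}$-action comes from the codomain of the Hom.
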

\begin{proof}This follows from the fact the $\bullet$-action comes from the domain in \eqref{eq:dualdef}, and the right $\cQ_{W^{\dyn}}$-action comes from the codomain of \eqref{eq:dualdef}. One can also check directly:
for any $x,x'\in \cQ_{W^{\dyn}\times W}, p\in \cQ_{W^{\dyn}}, g\in \cQ_{W^{\dyn}\times W}^\star$, we have
\[
(x\bullet (gp))(x')=(gp)(x'x)=g(x'x)p=(x\bullet g)(x')p=((x\bullet g)p)(x'),
\]
so $(x\bullet (gp))=(x\bullet g)p$. 
\end{proof}
\begin{lemma}
We have the following explicit formulas of the $\bullet$-action:
\[
\de_w\bullet g_vb\de_u^{\dyn}=g_{vw^{-1}}b\de_u^{\dyn}, \quad a\de_w^{\dyn}\bullet g_vb\de_u^{\dyn}=g_v\cdot{}^va\cdot{}^{w^{\dyn}}b\de_{wu}^{\dyn}, \quad a, b\in \cQ. 
\]
\end{lemma}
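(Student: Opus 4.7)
The plan is to verify both formulas directly from the definition $(x\bullet g)(x')=g(x'x)$, by evaluating both sides on an arbitrary basis element $\de_{v'}$ of $\cQ_{W^{\dyn}\times W}$ viewed as a free left $\cQ_{W^{\dyn}}$-module, and using the product rule \eqref{eq:twistedproduct} together with the fact that $g_v$ is the left $\cQ_{W^{\dyn}}$-linear dual basis to $\{\de_w\}$, so that $g_v(p\de_{v'})=p\cdot \de^{\Kr}_{v,v'}$ for $p\in \cQ_{W^{\dyn}}$.

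For the first formula, I would compute
\[
(\de_w\bullet g_v b\de_u^{\dyn})(\de_{v'})=(g_v b\de_u^{\dyn})(\de_{v'}\de_w)=(g_v b\de_u^{\dyn})(\de_{v'w}),
\]
and then use right $\cQ_{W^{\dyn}}$-linearity of $g_v b\de_u^{\dyn}$ together with $g_v(\de_{v'w})=\de^{\Kr}_{v,v'w}$ to obtain $\de^{\Kr}_{vw^{-1},v'}\cdot b\de_u^{\dyn}$, which matches $(g_{vw^{-1}}b\de_u^{\dyn})(\de_{v'})$.

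For the second formula, the key preparatory step is to rewrite $\de_{v'}\cdot a\de_w^{\dyn}$ in the canonical form $p\cdot \de_{v'}$ with $p\in \cQ_{W^{\dyn}}$. Applying \eqref{eq:twistedproduct} (and the commutation $\de_{v'}\de_w^{\dyn}=\de_w^{\dyn}\de_{v'}$) gives
\[
\de_{v'}\cdot a\de_w^{\dyn}={}^{v'}a\cdot \de_w^{\dyn}\de_{v'}.
\]
Then evaluating
\[
(a\de_w^{\dyn}\bullet g_vb\de_u^{\dyn})(\de_{v'})=(g_vb\de_u^{\dyn})({}^{v'}a\de_w^{\dyn}\de_{v'})=\de^{\Kr}_{v,v'}\cdot {}^{v'}a\de_w^{\dyn}\cdot b\de_u^{\dyn},
\]
and once more moving $b$ past $\de_w^{\dyn}$ via \eqref{eq:twistedproduct}, one gets $\de^{\Kr}_{v,v'}\cdot {}^{v}a\cdot {}^{w^{\dyn}}b\cdot \de_{wu}^{\dyn}$, which is precisely $(g_v\cdot {}^va\cdot {}^{w^{\dyn}}b\de_{wu}^{\dyn})(\de_{v'})$.

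There is no conceptual obstacle here; the statement is essentially an unwinding of definitions. The only thing to be vigilant about is the bookkeeping of the twists: distinguishing between the $W$-action and the $W^{\dyn}$-action on $\cQ$, and applying the commutation relation $\de_{v'}\de_w^{\dyn}=\de_w^{\dyn}\de_{v'}$ at the right moment so that $g_v$ is applied to an element already in the normal form $p\de_{v'}$ with $p\in \cQ_{W^{\dyn}}$. Once that normal form is reached, $\cQ_{W^{\dyn}}$-linearity of $g_v$ and duality conclude the calculation.
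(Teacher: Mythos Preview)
Your proposal is correct and follows essentially the same approach as the paper: both evaluate on an arbitrary $\de_{v'}$ (the paper writes $\de_x$), use the right $\cQ_{W^{\dyn}}$-linearity to strip off $b\de_u^{\dyn}$, and reduce to computing $g_v$ on $\de_{v'}\de_w$ and on $\de_{v'}\cdot a\de_w^{\dyn}={}^{v'}a\,\de_w^{\dyn}\de_{v'}$ respectively. Your write-up is slightly more explicit about the normal-form bookkeeping, but the argument is the same.
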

\begin{proof}
Since the $\bullet$-action is right $\cQ_{W^\dyn}$-linear, it suffices  to evaluate the elements on $\de_x, x\in W$. 
We have 
\begin{align*}
(\de_w\bullet g_vb\de_u^\dyn)(\de_x)&=g_v(\de_x\de_w)b\de_u^\dyn=\de^{\Kr}_{xw,v}b\de_u^\dyn,\\
(a\de_w^\dyn\bullet g_vb\de_u^\dyn)(\de_x)&=g_v(\de_xa\de_w^\dyn)b\de_u^\dyn=\de_{x,v}^{\Kr}\cdot {}^xa\cdot {}^{w^\dyn}b\de^\dyn_{wu},
\end{align*}
so the identities follow. 
\end{proof}

The bimodule $\cQ_{W^{\dyn}\times W}^\star$ has a non-commutative product structure, defined as follows:
\[
(g_wp)\cdot (g_vq)=g_v\de^{\Kr}_{w,v}pq, \quad p,q\in \cQ_{W^{\dyn}}. 
\]
The multiplicative identity is $\unit=\sum_{w\in W}g_w$.  Concerning the compatibility of the product with the $\bullet$-action, we have the following special cases: 
\begin{lemma}For any $p\in \cQ_{W^{\dyn}}, w\in W, g,g'\in \cQ_{W^{\dyn}\times W}^\star$, we  have
\[
p\bullet (g\cdot g')=(p\bullet g) \cdot g',\quad 
\de_w\bullet (g\cdot g')=(\de_w\bullet g)\cdot (\de_w\bullet g').
\]
\end{lemma}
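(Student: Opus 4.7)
The plan is to reduce both identities to a direct verification on basis elements of the form $g=g_vb\de_u^{\dyn}$ and $g'=g_{v'}c\de_{u'}^{\dyn}$ with $b,c\in\cQ$, together with $p=a\de_w^{\dyn}$ for the first identity. By additivity of the $\bullet$-action and bilinearity of the product $\cdot$, these cases suffice. The strategy in both cases is to expand each side using the explicit $\bullet$-formulas from the preceding lemma and the product rule $(g_wp)\cdot(g_vq)=\de_{w,v}^{\Kr}g_vpq$, and to check that the resulting expressions coincide term-by-term.

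For the first identity, I would first compute $g\cdot g'=\de_{v,v'}^{\Kr}g_v\cdot b\cdot {}^{u^{\dyn}}c\,\de_{uu'}^{\dyn}$ inside $\cQ_{W^{\dyn}\times W}^\star$ and then apply $a\de_w^{\dyn}\bullet$ to obtain $\de_{v,v'}^{\Kr}g_v\cdot {}^va\cdot {}^{w^{\dyn}}(b\cdot {}^{u^{\dyn}}c)\,\de_{wuu'}^{\dyn}$. Going the other way, $(a\de_w^{\dyn}\bullet g)\cdot g'$ evaluates to $\de_{v,v'}^{\Kr}g_v\cdot {}^va\cdot {}^{w^{\dyn}}b\cdot {}^{(wu)^{\dyn}}c\,\de_{wuu'}^{\dyn}$. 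These agree because the dynamical action is a $W$-action by ring automorphisms of $\cQ$, giving ${}^{w^{\dyn}}(b\cdot {}^{u^{\dyn}}c)={}^{w^{\dyn}}b\cdot {}^{(wu)^{\dyn}}c$.

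For the second identity, the computation is even more transparent: $\de_w\bullet$ merely translates the $g$-index from $v$ to $vw^{-1}$ without touching the $\cQ_{W^{\dyn}}$-component. Hence $\de_w\bullet(g\cdot g')=\de_{v,v'}^{\Kr}g_{vw^{-1}}\cdot b\cdot {}^{u^{\dyn}}c\,\de_{uu'}^{\dyn}$, while $(\de_w\bullet g)\cdot(\de_w\bullet g')=\de_{vw^{-1},v'w^{-1}}^{\Kr}g_{vw^{-1}}\cdot b\cdot {}^{u^{\dyn}}c\,\de_{uu'}^{\dyn}$, and right-cancellation in $W$ equates the two Kronecker symbols.

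The only place needing genuine attention is tracking where classical twists ${}^v$ and dynamical twists ${}^{w^{\dyn}}$ are inserted, since $\bullet$-multiplication by $a\de_w^{\dyn}$ acts on the coefficient recorded by the $g$-index via ${}^v$ and on everything to the right of $g_v$ via ${}^{w^{\dyn}}$. Beyond this bookkeeping there is no real obstacle; both identities follow directly from the formulas in the preceding two lemmas.
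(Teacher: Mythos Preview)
Your proof is correct and follows essentially the same approach as the paper's: both reduce to basis elements $g=g_vb\de_u^{\dyn}$, $g'=g_{v'}c\de_{u'}^{\dyn}$, $p=a\de_w^{\dyn}$ and verify directly using the explicit $\bullet$-formulas and the product rule. The only cosmetic difference is that the paper leaves the $\cQ_{W^{\dyn}}$-factor unexpanded as $a\de_u^{\dyn}a'\de_{u'}^{\dyn}$ rather than rewriting it as $a\cdot{}^{u^{\dyn}}a'\,\de_{uu'}^{\dyn}$, which absorbs your final observation ${}^{w^{\dyn}}(b\cdot{}^{u^{\dyn}}c)={}^{w^{\dyn}}b\cdot{}^{(wu)^{\dyn}}c$ into the associativity of $\cQ_{W^{\dyn}}$; the content is identical.
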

Note that in general $p\bullet (g\cdot g')\neq g\cdot (p\bullet g')$.
\begin{proof}
Let $p=b\de_{w_1}^{\dyn}, g=g_va\de_{u}^{\dyn}, g'=g_{v'}a'\de_{u'}^{\dyn}$, then 
\begin{align*}
p\bullet (g\cdot g')=b\de_{w_1}^{\dyn}\bullet (g_va\de_{u}^{\dyn}\cdot g_{v'}a'\de_{u'}^{\dyn})=b\de_{w_1}^{\dyn}\bullet (g_v\de_{v,v'}^{\Kr}a\de_u^{\dyn}a'\de_{u'}^{\dyn})=g_v\de_{v,v'}^{\Kr}{}^vb
\cdot \de_{w_1}^{\dyn}a\de_u^{\dyn}a'\de_{u'}^{\dyn}=(p\bullet g)\cdot g'.
\end{align*}
Similarly, 
\[
\de_w\bullet (g\cdot g')=\de_w\bullet ((g_v\de_{v,v'}^{\Kr}a\de_u^{\dyn}a'\de_{u'}^{\dyn}))=g_{vw^{-1}}\de_{v,v'}^{\Kr}a\de_u^{\dyn}a'\de_{u'}^{\dyn})=(\de_w\bullet g)\cdot (\de_w\bullet g'). 
\]
\end{proof}

\subsection{}
We will be considering the class $T^{-\la}_{w^{-1}w_0}\bullet g_{w_0}$, which is related with the opposite Schubert classes defined below. It coincides with the dual basis $(T_w^{-\la})^\star$ up to a normalization.
\begin{lemma} \label{lem:Twithdual} In $\cQ_{W^{\dyn}\times W}^\star$, we have 
\begin{align*}
T^{\la}_{w^{-1}w_0}\bullet g_{w_0}=(T^{\la}_w)^\star\frac{\bfg}{\bfh}\de_{w_0}^{\dyn}, &&T^{-\la}_{w^{-1}w_0}\bullet g_{w_0}=(T^{-\la}_w)^\star\frac{\bfg}{{}^{w_0^\dyn}\bfh}\de_{w_0}^{\dyn}.
\end{align*}
\end{lemma}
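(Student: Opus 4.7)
The plan is to verify both identities by evaluating the two sides at each element of the left $\cQ_{W^\dyn}$-basis $\{T^{\pm\la}_u : u\in W\}$ of $\cQ_{W^\dyn\times W}$, whose existence is guaranteed by Theorem~\ref{thm:mat}(b). Since elements of $\cQ_{W^\dyn\times W}^\star=\Hom_{\cQ_{W^\dyn}}(\cQ_{W^\dyn\times W},\cQ_{W^\dyn})$ are determined by their values on any $\cQ_{W^\dyn}$-basis, this will reduce each identity to a one-line comparison rather than a calculation in the $g_v\de_w^\dyn$-expansion.

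The crucial input for the LHS is that, because $(T^\la_\al)^2=1$ and the braid relations hold (properties (i) and (ii) after Remark~\ref{rem:T}), the assignment $w\mapsto T^\la_w$ defines a group homomorphism from $W$ into the units of $\cQ_{W^\dyn\times W}$. Consequently $T^\la_u\cdot T^\la_{w^{-1}w_0}=T^\la_{u w^{-1}w_0}$ for every $u\in W$, and the same with $-\la$ in place of $\la$. Combined with the definition of the $\bullet$-action, this yields
\[
(T^\la_{w^{-1}w_0}\bullet g_{w_0})(T^\la_u)=g_{w_0}(T^\la_{u w^{-1}w_0}).
\]
Next I would expand $T^\la_v=\sum_y a^\la_{v,y}\de_v^\dyn\de_y$ via \eqref{eq:1}: the $g_{w_0}$-coefficient equals $a^\la_{v,w_0}\de_v^\dyn$, which vanishes unless $w_0\le v$. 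This forces $v=w_0$, hence $u=w$, and in that case the value is $a^\la_{w_0,w_0}\de_{w_0}^\dyn=\frac{\bfg}{\bfh}\de_{w_0}^\dyn$ by \eqref{eq:abw0}.

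The RHS evaluates directly by the dual-basis property and the definition of the right $\cQ_{W^\dyn}$-action:
\[
\bigl((T^\la_w)^\star\cdot \tfrac{\bfg}{\bfh}\de_{w_0}^\dyn\bigr)(T^\la_u)=(T^\la_w)^\star(T^\la_u)\cdot \tfrac{\bfg}{\bfh}\de_{w_0}^\dyn=\de^{\Kr}_{w,u}\cdot \tfrac{\bfg}{\bfh}\de_{w_0}^\dyn.
\]
The two expressions agree on every $T^\la_u$, proving the first identity. The second identity follows by repeating the argument with $\la$ replaced by $-\la$ and invoking $a^{-\la}_{w_0,w_0}=\frac{\bfg}{{}^{w_0^\dyn}\bfh}$ from \eqref{eq:abw0}.

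The main obstacle I anticipate is spotting the group-homomorphism shortcut. Without it, the natural route is to expand both sides in the $g_v\de_w^\dyn$-basis using \eqref{eq:dual} and the explicit $\bullet$-action formulas; one is then left to verify the considerably messier scalar identity ${}^u a^\la_{w^{-1}w_0,u^{-1}w_0}=b^\la_{u,w}\cdot{}^{(w^{-1})^\dyn}(\bfg/\bfh)$ for every $u$, which requires further manipulations via \eqref{eq:invmat} (and possibly Lemma~\ref{lem:reverse}). The evaluation-on-$T^\la_u$ approach avoids this entirely.
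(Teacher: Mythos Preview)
Your proposal is correct and follows essentially the same approach as the paper's own proof: both evaluate the left-hand side on the $\cQ_{W^\dyn}$-basis $\{T^\la_u\}$, use the multiplicativity $T^\la_u\,T^\la_{w^{-1}w_0}=T^\la_{uw^{-1}w_0}$, then read off the coefficient via Theorem~\ref{thm:mat} and \eqref{eq:abw0}, and deduce the $-\la$ case by substitution. The only difference is presentational---you make the ``group homomorphism'' observation and the basis-evaluation rationale explicit, whereas the paper leaves these implicit.
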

\begin{proof}By definition of the $\bullet$-action, we have
\begin{align*}
(T^{\la}_{w^{-1}w_0}\bullet g_{w_0} )(T^{\la}_{u})&=g_{w_0}(T^{\la}_{u}T^{\la}_{w^{-1}w_0})\\
&=g_{w_0}(T^{\la}_{uw^{-1}w_0})\\
&\overset{\text{Thm. } \ref{thm:mat}}=g_{w_0}(\sum_{v\le uw^{-1}w_0 }a^{\la}_{uw^{-1}w_0, v}\de_{uw^{-1}w_0}^{\dyn}\de_v)\\
&=\de_{w_0, uw^{-1}w_0}^{\Kr}a^{\la}_{w_0, w_0}\de_{w_0}^{\dyn}\\
&\overset{\eqref{eq:abw0}}=\de_{u,w}^{\Kr}\frac{\bfg}{\bfh}\de_{w_0}^{\dyn}=((T^\la_w)^\star\frac{\bfg}{\bfh}\de_{w_0}^{\dyn} )(\de_u). 
\end{align*}
Therefore, the  first identity is proved. The second one can be obtained by replacing $\la$ by $-\la$. 
\end{proof}
Below we will use the $-\la$-version of this lemma.  Following from \eqref{eq:dual} and Lemma  \ref{lem:Twithdual}, we have
\begin{align}
T^{-\la}_{w^{-1}w_0}\bullet g_{w_0}&\overset{\text{Lem. }\ref{lem:Twithdual}}=(T^{-\la}_w)^\star\frac{\bfg}{{}^{w_0^\dyn}\bfh}\de_{w_0}^\dyn\\
&\overset{\eqref{eq:dual}}=\sum_vg_vb^{-\la}_{v,w}\de_{w^{-1}}^\dyn\frac{\bfg}{{}^{w_0^\dyn}\bfh}\de_{w_0}^\dyn\\
\label{eq:Tdualres}&=\sum_{v\ge w}g_vb_{v,w}^{-\la}\cdot \frac{\bfg}{{}^{(w^{-1}w_0)^{\dyn}}\bfh}\de_{w^{-1}w_0}^{\dyn}. 
\end{align}

\begin{example}
If $w=w_0$, then $T^{-\la}_{w^{-1}w_0}\bullet g_{w_0}=g_{w_0}$, and the right hand side of the identity in \eqref{eq:Tdualres} is 
\[
g_{w_0}b^{-\la}_{w_0, w_0}\frac{\bfg}{\bfh}\overset{\eqref{eq:abw0}}=g_{w_0}\frac{\bfh}{\bfg}\frac{\bfg}{\bfh}=g_{w_0}.
\]
\end{example}

\section{The dual as a  $\cQ$-module} \label{sec:dual}
In this section, we define the elliptic Schubert classes and the opposite elliptic Schubert classes, and show that they are dual to each other.

\subsection{The $\cQ$-dual} 
Define\[
\cQ_W^*=\Hom(W,\cQ),
\]
which is a free right $\cQ$-module with basis $f_w$ such that $f_w(v)=\de_{w,v}$.    Indeed, because $\cQ$ is commutative, it does not matter whether we write the coefficients on the left or on the right.  Note that we use $()^\star$ for the dual of left $\cQ_{W^\dyn}$-module and $()^*$ for the dual of left $\cQ$-module. 
The module $\cQ^*_W$ has a commutative product coming from that of $\cQ$. That is, 
\[
(f_wa)\cdot (f_vb)=\de_{w,v}^{\Kr}f_wab, \quad a,b\in \cQ. 
\]
The multiplicative identity is also denoted by $\unit=\sum_wf_w. $

Similar as above, there is also an action of $\cQ_{W^{\dyn}\times W}$ on $\cQ_W^*$:
\begin{equation}\label{eq:bullet}
a\de_w^{\dyn}\de_v\bullet (f_ub)=f_{uv^{-1}}\cdot{}^{uv^{-1}}a \cdot {}^{w^{\dyn}}b, \quad a,b\in \cQ. 
\end{equation}
The action of $\de_w^{\dyn}\in \cQ_{W^{\dyn}\times W}$ is not $\cQ$-linear, but the action of $a\de_v\in\cQ_W\subset\cQ_{W^{\dyn}\times W}$ is. Note that it is different from that in \cite{LZZ23}, but it is compatible with the action considered in \cite{RW19}.  Other than the dynamical Weyl group action, this $\bullet$-action behaves similarly as the corresponding actions in general oriented cohomology theories (cohomology and K-theory) considered in Kostant-Kumar and Calm\`es-Zainoulline-Zhong \cite{KK86, KK90, CZZ15}. 

Indeed, one can view $\cQ_W^*$ as the commutative subring inside $\cQ_{W^{\dyn}\times W}^\star$ induced by $\cQ\to \cQ_{W^{\dyn}}, a\mapsto a\de_e^{\dyn}$. Or equivalently, we have an  embedding of rings
\[
\phi:\cQ^*_W\to \cQ^\star_{W^{\dyn}\times W}, \quad f_wa\mapsto g_wa\de_e^{\dyn}, \quad a\in \cQ. 
\] If we view the right $\cQ_{W^\dyn}$-module $\cQ^\star_{W^{\dyn}\times W}$ as a right $\cQ$-module via the embedding $\cQ\to \cQ_{W^{\dyn}}$, then $\phi$ is right $\cQ$-linear.

We also have a surjection of abelian groups
\[\pi:\cQ_{W^{\dyn}\times W}^\star\to \cQ_W^*, \quad g_wa\de_{v}^{\dyn}\mapsto f_wa, \quad a\in \cQ.\]
This map does not preserve products, and it is not right $\cQ$-linear. However,  
\begin{lemma}  \label{lem:equiv}The map $\pi$ is $\cQ_{W^{\dyn}\times W}$-equivariant via the `$\bullet$'-action, that is, $\pi (x\bullet g)=x\bullet \pi(g)$ for any $x\in \cQ_{W^{\dyn}\times W}, g\in \cQ_{W^{\dyn}\times W}^\star$. 
\end{lemma}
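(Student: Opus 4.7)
The plan is to verify the identity $\pi(x\bullet g) = x\bullet \pi(g)$ by reducing to generators and then applying the explicit formulas for the two $\bullet$-actions. Both the projection $\pi$ and the $\bullet$-action are additive (in fact additive in each argument for $\bullet$), so it suffices to take $x$ of the form $a\de_w^\dyn\de_v$ and $g$ of the form $g_u b\de_s^\dyn$ with $a,b\in\cQ$ and $u,v,w,s\in W$.

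For such $x$ and $g$, I would exploit the defining property $(x'x)\bullet g = x'\bullet(x\bullet g)$ of the $\bullet$-action on $\cQ_{W^\dyn\times W}^\star$ to split the computation as $x\bullet g = (a\de_w^\dyn)\bullet(\de_v\bullet g)$. Then the two explicit formulas from the preceding lemma give, in succession,
\[
\de_v\bullet(g_u b\de_s^\dyn) = g_{uv^{-1}} b\de_s^\dyn,\qquad a\de_w^\dyn\bullet g_{uv^{-1}}b\de_s^\dyn = g_{uv^{-1}}\cdot {}^{uv^{-1}}a\cdot {}^{w^\dyn}b\,\de_{ws}^\dyn,
\]
so that $\pi(x\bullet g) = f_{uv^{-1}}\cdot {}^{uv^{-1}}a\cdot {}^{w^\dyn}b$. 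On the other side, $\pi(g) = f_u b$ by definition of $\pi$, and the defining formula \eqref{eq:bullet} gives $x\bullet\pi(g) = a\de_w^\dyn\de_v\bullet f_u b = f_{uv^{-1}}\cdot{}^{uv^{-1}}a\cdot{}^{w^\dyn}b$, which matches.

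The identity on generators then extends by additivity to arbitrary elements of $\cQ_{W^\dyn\times W}$ and $\cQ_{W^\dyn\times W}^\star$. There is no real obstacle here; the only point requiring mild care is that the $\bullet$-action is defined through precomposition, so the decomposition $x = (a\de_w^\dyn)\cdot\de_v$ must be applied in the order $(a\de_w^\dyn)\bullet(\de_v\bullet g)$ rather than the reverse. Once this bookkeeping is done, the formula \eqref{eq:bullet} was arranged precisely so that the $\de^\dyn$-part of $g$ is forgotten under $\pi$ in a way compatible with how $a\de_w^\dyn$ acts on both spaces.
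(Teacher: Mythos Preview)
Your proof is correct and follows essentially the same approach as the paper: reduce to generators $x=a\de_w^\dyn\de_v$ and $g=g_ub\de_{u'}^\dyn$, compute $x\bullet g$ explicitly, apply $\pi$, and compare with $x\bullet\pi(g)$ via \eqref{eq:bullet}. The only cosmetic difference is that you factor $x\bullet g$ through the two formulas of the preceding lemma, whereas the paper performs this in a single step; the resulting computations are identical.
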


\begin{proof}
Let $x=a\de_w^{\dyn}\de_v\in \cQ_{W^{\dyn}\times W}, g=g_ub\de_{u'}^{\dyn}\in \cQ^\star_{W^{\dyn}\times W}$, then
\begin{align*}
\pi(a\de_w^{\dyn}\de_v\bullet g_ub\de_{u'}^{\dyn})&=\pi(g_{uv^{-1}}\cdot{}^{uv^{-1}}a\cdot {}^{w^{\dyn}}b\de_{wu'}^{\dyn})=f_{uv^{-1}}\cdot {}^{uv^{-1}}a\cdot {}^{w^{\dyn}}b=a\de_w^{\dyn}\de_v\bullet f_ub=a\de_w^{\dyn}\de_v\bullet \pi(g). 
\end{align*}
\end{proof}
Starting from now, we only work with $\cQ_W^*$. 

\subsection{Elliptic Schubert classes}
We  define the main objects of this paper. 
\begin{definition}Let $\la\in \catA^\vee$. We define the elliptic Schubert class associated to $\la$ to be $\bfE^\la_w=T^\la_{w^{-1}}\bullet f_e\in \cQ_W^*$, and the opposite elliptic Schubert class to be $\Ell^{-\la}_w=T^{-\la}_{w^{-1}w_0}\bullet f_{w_0}\in \cQ_W^*$.
\end{definition}
\begin{remark} Recall that our operator $T^\la_\al$ agrees with the operator $T^{z,\la}_\la$ in \cite{LZZ23}, but the action $\bullet$ is different from that in \cite{LZZ23} (see \S~\ref{sec:dual}). Moreover, our elliptic class $\bfE_e^\la=f_e$ where the elliptic class $\bfE^{z,\la}_e$ in \cite[Appendix A]{LZZ23} is equal  to $\mathfrak{c}f_e$ where $\mathfrak{c}$ is a rational section of the twisted Poincar\'e line bundle $\mathbb{L}$. 

On the other hand, by using the identification spelled out in Remark \ref{rem:T}, one can identify the restriction $\bfE_w^\la|_\sigma$ with the classes $E_\sigma(X_w,\la)$ in \cite{RW19} with $\sigma\in W. $
\end{remark}
The elliptic Schubert classes $\bfE^\la_w$ were defined in \cite{RW19}. We will show below that they are dual to the opposite Schubert classes $\Ell^{-\la}_w$. 

By definition and the relation that $T^\la_wT_v^\la=T_{wv}^\la$,  we have the following recursive formulas:
\[
T_i^\la\bullet \bfE^\la_w=\bfE^\la_{ws_i}, \quad T_i^{-\la}\bullet \Ell^{-\la}_{w}=\Ell^{-\la}_{ws_i}. 
\]
The first one was referred as the Bott-Samelson recursion in \cite{RW19}. 

\begin{remark}
We can define a right action of $T_i^\la$ on $\bfE^\la_w$ by 
\[
 \bfE^\la_w\odot T_i^\la:=T_{w^{-1}}^\la T_i^\la\bullet \bfE^\la_e=\bfE^\la_{s_iw}.
\]
This gives the R-matrix recursion of the elliptic Schubert classes in \cite{RW19}. 
This action permutes the elliptic Schubert classes, but they do not extend to actions of $\cQ_{W}$ or $\cQ_{W^\dyn\times W}$ on $\cQ_W^*$.
\end{remark}

We define the auxillary elliptic Schubert classes 
\[
(T_w^{-\la})^*:=\pi((T_w^{-\la})^\star)=\sum_{v\ge w}f_vb^{-\la}_{v,w}, \]
and we have
\begin{align}\label{eq:piEllopp}
\calE_w^{-\la}=T^{-\la}_{w^{-1}w_0}\bullet f_{w_0}\overset{\text{Lem. }\ref{lem:equiv}}=\pi(T^{-\la}_{w^{-1}w_0}\bullet g_{w_0})
\overset{\eqref{eq:Tdualres}}=\sum_{v\ge w}f_vb_{v,w}^{-\la}\frac{\bfg}{{}^{(w^{-1}w_0)^\dyn}\bfh}
=(T_w^{-\la})^*\frac{\bfg}{{}^{(w^{-1}w_0)^\dyn}\bfh}. 
\end{align}
From Theorem \ref{thm:mat}, we have the following restriction formula:
\begin{align}\label{eq:ellres}
\bfE_w^\la&=\sum_{v\le w}f_{v} \cdot{}^va^\la_{w^{-1},v^{-1}}.
\end{align}

The following lemma shows the relation between the `$a$'-coefficients and the `$b$'-coefficients.

\begin{lemma}We have
\[
u(a^{-\la}_{w^{-1}w_0
, u^{-1}w_0})=b^{-\la}_{u,w}\frac{\bfg}{{}^{(w^{-1}w_0)^\dyn}\bfh}. 
\]
\end{lemma}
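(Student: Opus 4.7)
The plan is to compute $\calE_w^{-\la}$ in two different ways and compare coefficients. One expression is already given by \eqref{eq:piEllopp}, namely
\[
\calE_w^{-\la}=\sum_{v\ge w}f_v\, b^{-\la}_{v,w}\cdot \frac{\bfg}{{}^{(w^{-1}w_0)^{\dyn}}\bfh}.
\]
The other expression I would obtain directly from the definition $\calE_w^{-\la}=T^{-\la}_{w^{-1}w_0}\bullet f_{w_0}$ using the first transition formula of Theorem \ref{thm:mat}(a) together with the $\bullet$-action \eqref{eq:bullet}.

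First I would set $x=w^{-1}w_0$ and expand
\[
T^{-\la}_x=\sum_{y\le x}a^{-\la}_{x,y}\,\de_x^{\dyn}\de_y.
\]
Applying the action \eqref{eq:bullet} to $f_{w_0}$, with $a=a^{-\la}_{x,y}$, $w=x$, $v=y$, $u=w_0$, $b=1$, each summand contributes
\[
a^{-\la}_{x,y}\,\de_x^{\dyn}\de_y\bullet f_{w_0}=f_{w_0 y^{-1}}\cdot {}^{w_0 y^{-1}}\!a^{-\la}_{x,y}.
\]
Next I would perform the change of variable $u=w_0 y^{-1}$, i.e.\ $y=u^{-1}w_0$. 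Since $y\mapsto w_0 y^{-1}$ is an order-reversing involution of $(W,\le)$ (combining inversion, which preserves Bruhat order, with left multiplication by $w_0$, which reverses it), the condition $y\le w^{-1}w_0$ is equivalent to $u\ge w$. This gives
\[
\calE_w^{-\la}=\sum_{u\ge w}f_u\cdot {}^{u}\!a^{-\la}_{w^{-1}w_0,\,u^{-1}w_0}.
\]

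Finally, comparing this with \eqref{eq:piEllopp} and matching the coefficient of $f_u$ yields exactly
\[
{}^{u}\!a^{-\la}_{w^{-1}w_0,\,u^{-1}w_0}=b^{-\la}_{u,w}\cdot\frac{\bfg}{{}^{(w^{-1}w_0)^{\dyn}}\bfh},
\]
which is the stated lemma. The only step requiring care is the Bruhat-order bookkeeping under $y\mapsto w_0 y^{-1}$, which can be verified by hand (or by noting that $\ell(w_0 y^{-1})=\ell(w_0)-\ell(y)$ combined with the standard characterization of Bruhat order). Everything else is a direct substitution into \eqref{eq:bullet} and a coefficient comparison, so I expect no genuine obstacle beyond this bookkeeping.
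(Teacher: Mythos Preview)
Your proof is correct and follows essentially the same approach as the paper: expand $T^{-\la}_{w^{-1}w_0}$ via Theorem~\ref{thm:mat}(a), apply the $\bullet$-action to $f_{w_0}$, substitute $u=w_0y^{-1}$, and compare with \eqref{eq:piEllopp}. The paper omits the Bruhat-order justification you supply (which is correct but not strictly needed, since coefficient comparison in the basis $\{f_u\}$ suffices regardless of the indexing range).
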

\begin{proof}
We have 
\begin{align*}\Ell^{-\la}_w&=T^{-\la}_{w^{-1}w_0}\bullet f_{w_0}=\sum_{v\le w^{-1}w_0}a^{-\la}_{w^{-1}w_0,v}\de^{\dyn}_{w^{-1}w_0}\de_v\bullet f_{w_0}=\sum_{v\le w^{-1}w_0}f_{w_0v^{-1}}\cdot{}^{w_0v^{-1}}a^{-\la}_{w^{-1}w_0, v}. 
\end{align*}
Letting $u=w_0v^{-1}$, and comparing with \eqref{eq:piEllopp}, we obtain the result. 
\end{proof}
\subsection{The Poincar\'e Pairing}
Define
\begin{equation}
Y'_\Pi=\sum_{w\in W}\de_w\prod_{\al>0}\frac{\theta(z_\alpha)}{\theta(\hbar-z_\al)}=\sum_{w\in W}\de_w\frac{1}{\bfg}. 
\end{equation}
The $\bullet$-action of $Y_\Pi'$ provides the following duality:
\begin{lemma}\label{lem:duality1}We have 
\[
Y_\Pi'\bullet \left(\bfE_v^\la\cdot (T_w^{-\la})^*\right)=\unit\frac{\de^{\Kr}_{v,w}}{\bfg}.
\]
\end{lemma}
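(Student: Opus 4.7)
The plan is to expand both factors in the $\{f_u\}$-basis of $\cQ_W^*$, use componentwise multiplication to collapse their product to a single sum, apply $Y_\Pi'$ term by term, and reduce the resulting coefficient to the inverse-matrix identity \eqref{eq:invmat}. By the restriction formula \eqref{eq:ellres} and the definition of $(T_w^{-\la})^*$,
\[
\bfE_v^\la = \sum_{u\le v}f_u\cdot {}^u a^\la_{v^{-1},u^{-1}}, \qquad (T_w^{-\la})^* = \sum_{u\ge w}f_u\,b^{-\la}_{u,w},
\]
so the componentwise product in $\cQ_W^*$ collapses to
$\bfE_v^\la\cdot (T_w^{-\la})^* = \sum_{w\le u\le v} f_u\cdot {}^u a^\la_{v^{-1},u^{-1}}\cdot b^{-\la}_{u,w}.$

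For the $\bullet$-action of $Y_\Pi'$ on a term $f_u c$, I would rewrite $\de_x(1/\bfg)={}^x(1/\bfg)\de_x$ in the twisted product and apply \eqref{eq:bullet}, obtaining $\de_x(1/\bfg)\bullet(f_u c)=f_{ux^{-1}}\cdot {}^u(1/\bfg)\cdot c$. Summing over $x\in W$ and re-indexing by $y:=ux^{-1}$ (a bijection between $x$ and $y$ for each fixed $u$) yields
\[
Y_\Pi'\bullet\bigl(\bfE_v^\la\cdot (T_w^{-\la})^*\bigr) = \sum_{y\in W}f_y\cdot S_{v,w}, \qquad S_{v,w}:=\sum_u {}^u\!\Bigl(\frac{a^\la_{v^{-1},u^{-1}}}{\bfg}\Bigr)\cdot b^{-\la}_{u,w}.
\]
Crucially, $S_{v,w}$ is independent of $y$, which is precisely what produces the factor $\unit=\sum_y f_y$ on the right-hand side of the asserted identity.

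It remains to show $S_{v,w}=\de^{\Kr}_{v,w}/\bfg$. Here I would apply Lemma \ref{lem:reverse} in the form ${}^u(a^\la_{v^{-1},u^{-1}}/\bfg)={}^{(v^{-1})^\dyn}a^{-\la}_{v,u}/\bfg$ to pull $1/\bfg$ out of the sum, giving $S_{v,w}=(1/\bfg)\sum_u {}^{(v^{-1})^\dyn}a^{-\la}_{v,u}\cdot b^{-\la}_{u,w}$. The remaining sum is obtained from the first identity of \eqref{eq:invmat} (with $\la$ replaced by $-\la$ and indices $(w,v,u)$ replaced by $(v,u,w)$) by applying ${}^{(v^{-1})^\dyn}$ to both sides; since the Kronecker symbol is $W$-invariant, this equals $\de^{\Kr}_{v,w}$, completing the identity. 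The step requiring the most care is the index bookkeeping that aligns the conjugation ${}^u(\cdot)$ with the $a^\la$-indices so that Lemma \ref{lem:reverse} produces exactly the $a^{-\la}$-pattern that pairs with $b^{-\la}$ in \eqref{eq:invmat}; once that alignment is in place, Poincar\'e duality becomes a direct consequence of the matrix-inverse relations already established in Theorem \ref{thm:mat}.
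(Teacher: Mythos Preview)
Your proof is correct and follows essentially the same route as the paper: expand $\bfE_v^\la$ and $(T_w^{-\la})^*$ in the $f_u$-basis, apply $Y'_\Pi$ term by term to produce $\unit$ times a $y$-independent sum, then use Lemma~\ref{lem:reverse} to convert ${}^u(a^\la_{v^{-1},u^{-1}}/\bfg)$ into ${}^{(v^{-1})^\dyn}a^{-\la}_{v,u}/\bfg$ and conclude via the inverse-matrix identity \eqref{eq:invmat}. Your citation of the \emph{first} identity in \eqref{eq:invmat} (with $\la\to-\la$, then acted on by $(v^{-1})^\dyn$) is in fact the cleaner match for the sum $\sum_u {}^{(v^{-1})^\dyn}a^{-\la}_{v,u}\,b^{-\la}_{u,w}$; the paper refers to the second identity, but the substance is the same inverse-matrix relation.
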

\begin{proof}
From \eqref{eq:ellres}, we have
\begin{align*}\bfE^\la_v=T^{ \la}_{v^{-1}}\bullet f_e=\sum_{w_1\in W}f_{w_1}\cdot{}^{w_1}a^{ \la}_{v^{-1}, w_1^{-1}}.
\end{align*}
Equation \eqref{eq:dual} gives   $(T_w^{- \la})^*=\sum_{v}f_{v}b^{- \la}_{v,w}$, so 
\begin{align*}
Y'_\Pi\bullet \left( \bfE_v^\la\cdot (T^{- \la}_w)^*\right)&=Y'_\Pi\bullet \left (\sum_{w_1\in W}f_{w_1}\cdot{}^{w_1}a_{v^{-1}, w_1^{-1}}^{ \la}\cdot b^{- \la}_{w_1, w}\right)\\
&=\sum_{w_2\in W}\de_{w_2}\frac{1}{\bfg}\bullet \left(\sum_{w_1}f_{w_1}\cdot{}^{w_1}a_{v^{-1}, w_1^{-1}}^{ \la}\cdot b^{- \la}_{w_1,w}\right)\\
&=\sum_{w_2, w_1\in W}f_{w_1w_2^{-1}}\frac{{}^{w_1}a^{ \la}_{v^{-1}, w_1^{-1}}\cdot b^{- \la}_{w_1, w}}{{}^{w_1}\bfg}\\
&\overset{w_3:=w_1w_2^{-1}}=(\sum_{w_3\in W}f_{w_3})\sum_{w_1\in W}\frac{{}^{w_1}a^{ \la}_{v^{-1}, w_1^{-1}}}{{}^{w_1}\bfg}b^{- \la}_{w_1, w}\\
&\overset{\text{Lem.} ~\ref{lem:reverse}}=\unit \sum_{w_1\in W} \frac{{}^{(v^{-1})^{\dyn}}a^{-\la}_{v,w_1}\cdot b_{w_1,w}^{-\la}}{\bfg}\\
&\overset{(*)}=\unit\frac{\de^{\Kr}_{v,w}}{\bfg}.
\end{align*}
Here equality $(*)$ follows from applying $(w^{-1})^{\dyn}$ on the second  identity of  \eqref{eq:invmat}. 
\end{proof}

To work with the opposite Schubert classes, we need to consider the following element in $\cQ_{W^{\dyn}\times W}$:
\begin{equation}
Y_\Pi=\sum_{v, w\in W}\de_w\de_ v^{\dyn}\prod_{\al>0}\frac{\theta(z_\alpha)}{\theta(\hbar-z_\al)}=\sum_{v\in W}\de_v^{\dyn}\sum_{w\in W}\de_w\frac{1}{\bfg}=(\sum_{v\in W}\de_v^{\dyn})Y_\Pi'.
\end{equation}
The map 
\[Y_\Pi\bullet\_:\cQ_W^*\to \cQ_W^*, f\mapsto Y_\Pi\bullet f\]
 is the algebraic model for the push-forward to the base point. So $\langle\_, \_\rangle:=Y_\Pi\bullet(\_\cdot \_)$ defines the Poincar\'e pairing. 

\begin{corollary}
We have
\[
Y_\Pi\bullet (\bfE^\la_w\cdot (T^{-\la}_{v})^*)=\unit\frac{\de_{w,v}^{\Kr}|W|}{\bfg}. 
\]
\end{corollary}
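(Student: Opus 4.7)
The plan is to reduce the corollary to Lemma \ref{lem:duality1} via the factorization $Y_\Pi=(\sum_{u\in W}\de_u^\dyn)\,Y'_\Pi$ built into the definition. The first observation is that the $\bullet$-action is a left action of $\cQ_{W^\dyn\times W}$: directly from $(x\bullet g)(x')=g(x'x)$ one sees $(xy)\bullet g=x\bullet(y\bullet g)$. Hence
\[
Y_\Pi\bullet\bigl(\bfE_w^\la\cdot(T^{-\la}_v)^*\bigr)=\Bigl(\sum_{u\in W}\de_u^\dyn\Bigr)\bullet\Bigl(Y'_\Pi\bullet\bigl(\bfE_w^\la\cdot(T^{-\la}_v)^*\bigr)\Bigr),
\]
and Lemma \ref{lem:duality1} evaluates the inner bracket as $\unit\,\de_{w,v}^{\Kr}/\bfg$, producing the Kronecker delta for free.

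It then remains to compute $\sum_{u\in W}\de_u^\dyn\bullet (\unit/\bfg)$. Using the explicit formula \eqref{eq:bullet} with $\de_u^\dyn=1\cdot\de_u^\dyn\de_e$, each $\de_u^\dyn$ acts on $f_w c$ by $f_w\cdot{}^{u^\dyn}c$. The key point is that $\bfg=\theta_\Pi(\hbar-z)/\theta_\Pi(z)$ is a function of the $z$-variable on $\catA$ only and carries no $\la$-dependence, so it is fixed by the dynamical Weyl group: ${}^{u^\dyn}(1/\bfg)=1/\bfg$ for every $u\in W$. Consequently $\de_u^\dyn\bullet(\unit/\bfg)=\unit/\bfg$, and summing over $W$ introduces the factor $|W|$, giving exactly $\unit\cdot|W|\de_{w,v}^{\Kr}/\bfg$ as claimed.

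There is essentially no obstacle in this argument beyond careful bookkeeping of the two commuting $W$-actions; the only thing to beware of is not to confuse $\bfg$ (invariant under $W^\dyn$ but not under $W$) with $\bfh$ (invariant under $W$ but not under $W^\dyn$). Once the factorization of $Y_\Pi$ and this $W^\dyn$-invariance of $\bfg$ are in hand, the corollary reduces to a one-line consequence of Lemma \ref{lem:duality1}.
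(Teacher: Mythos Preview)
Your argument is correct and is exactly the approach the paper takes: the paper's proof is the one-line ``This follows from Lemma \ref{lem:duality1} and the identity $Y_\Pi=(\sum_{v\in W}\de_v^{\dyn})Y_\Pi'$,'' and you have simply spelled out the implicit step that $\bfg$ is $W^\dyn$-invariant so that $\sum_u\de_u^\dyn\bullet(\unit/\bfg)=|W|\,\unit/\bfg$. One small quibble: the formula $(x\bullet g)(x')=g(x'x)$ you cite is the definition of the $\bullet$-action on $\cQ_{W^\dyn\times W}^\star$, whereas here we are in $\cQ_W^*$ with action \eqref{eq:bullet}; the associativity $(xy)\bullet f=x\bullet(y\bullet f)$ still holds there (it is a genuine module action), so this does not affect your argument.
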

\begin{proof}This follows from Lemma \ref{lem:duality1} and the identity $Y_\Pi=(\sum_{v\in W}\de_v^{\dyn})Y_\Pi'$. 
\end{proof}

\begin{lemma}
We have the following adjoint property
\[
Y_\Pi\bullet  \left((x\bullet f) \cdot f'\right)=Y_\Pi\bullet\left (f \cdot (\iota(x)\bullet f')\right), \quad f,f'\in \cQ_W^*, x\in \cQ_{W^{\dyn}\times W}.
\]
\end{lemma}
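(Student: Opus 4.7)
The plan is to reduce the identity to a small set of algebra generators of $\cQ_{W^\dyn \times W}$ and then verify it case-by-case using a closed form for the action of $Y_\Pi$. The adjoint identity is multiplicative in $x$: if it holds for $x_1$ and $x_2$, then
\[
Y_\Pi \bullet \big((x_1 x_2 \bullet f) \cdot f'\big) = Y_\Pi \bullet \big((x_2 \bullet f) \cdot (\iota(x_1) \bullet f')\big) = Y_\Pi \bullet \big(f \cdot (\iota(x_2)\iota(x_1) \bullet f')\big),
\]
which is the identity for $x_1 x_2$ in view of $\iota(x_1 x_2) = \iota(x_2)\iota(x_1)$. Since $\cQ_{W^\dyn \times W}$ is generated as an algebra by $\cQ$, the simple reflections $\{\de_{\al_i}\}$, and the dynamical simple reflections $\{\de_{\al_i}^\dyn\}$, it suffices to check these three families.

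Next, I will derive a closed form for the push-forward. Writing $Y_\Pi$ in standard form as $\sum_{v, w \in W} \frac{1}{{}^w \bfg}\, \de_v^\dyn \de_w$ (using $\de_w \cdot \frac{1}{\bfg} = \frac{1}{{}^w\bfg}\de_w$ and the fact that ${}^{v^\dyn}$ fixes $z$-functions) and applying the $\bullet$-action formula \eqref{eq:bullet} term by term, one obtains
\[
Y_\Pi \bullet (f_u c) = \unit \cdot \frac{1}{{}^u \bfg} \sum_{v \in W} {}^{v^\dyn} c, \qquad u \in W,\ c \in \cQ.
\]
Since multiplication in $\cQ^*_W$ is diagonal in the $W$-index, the adjoint identity reduces, for $f = f_u b$ and $f' = f_{u'} b'$, to a scalar equality in $\cQ$.

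For $x = a \in \cQ$, we have $\iota(a) = a$, and already $(a \bullet f) \cdot f' = f \cdot (a \bullet f')$ in $\cQ^*_W$, using commutativity of $\cQ$ together with the Kronecker $\de^\Kr_{u,u'}$ forcing $u = u'$. For $x = \de_\al^\dyn$, we have $\iota(\de_\al^\dyn) = \de_\al^\dyn$, and after applying the closed form the two sides match by the substitution $v \mapsto v s_\al$ in the Weyl-group sum $\sum_{v \in W} {}^{v^\dyn}(-)$, i.e., by translation invariance over $W$.

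The main obstacle is the case $x = \de_\al$, where the anti-involution carries a nontrivial scalar twist: $\iota(\de_\al) = \frac{\bfg}{{}^{s_\al}\bfg}\,\de_{s_\al}$. After the Kronecker forces $u' = u s_\al$, the twist contributes on the right-hand side the factor $\frac{{}^{u' s_\al}\bfg}{{}^{u'}\bfg} = \frac{{}^u\bfg}{{}^{u'}\bfg}$. This is a pure $z$-function, hence fixed by the dynamical sum, and when combined with the $\frac{1}{{}^u\bfg}$ coming from the push-forward formula it collapses to $\frac{1}{{}^{u'}\bfg}$, exactly matching the left-hand side (where $\de_\al \bullet f_u b = f_{us_\al}b$ and the push-forward produces $\frac{1}{{}^{u'}\bfg}$ directly). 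This cancellation is precisely what the twisting factor $\frac{{}^w\bfg}{\bfg}$ in the definition of $\iota$ was designed to produce, and finishes the proof.
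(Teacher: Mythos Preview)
Your proof is correct and takes a genuinely different route from the paper. The paper fixes a generic element $x=b\de_{w_1}^{\dyn}\de_{w_2}$ and generic $f=f_va$, $f'=f_{v'}a'$, then expands both sides of the identity in full and matches them after a reindexing of the dynamical sum; it is a single, somewhat lengthy, brute-force computation. You instead exploit the fact that the identity is multiplicative in $x$ and that $\iota$ is an anti-involution, reducing to the generators $a\in\cQ$, $\de_\al$, $\de_\al^{\dyn}$. Your closed form $Y_\Pi\bullet(f_uc)=\unit\cdot\frac{1}{{}^u\bfg}\sum_{v\in W}{}^{v^\dyn}c$ makes each generator check a one-line verification, and it isolates clearly where the twist $\frac{{}^w\bfg}{\bfg}$ in $\iota$ is actually needed (only for $x=\de_\al$). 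What your approach buys is modularity and a conceptual explanation of the twist; what the paper's approach buys is that it avoids appealing to generation and to associativity of the $\bullet$-action, handling all $x$ at once. Both are valid; yours is the cleaner of the two.
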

\begin{proof} Note that $\bfg$ is invariant under the dynamical action.  Let $f=f_{v}a$, $f'=f_{v'}a'$, $x=b\de_{w_1}^{\dyn}\de_{w_2}$ with $a,a',b
\in \cQ$, then by definition, $\iota(x)=\de_{w_1^{-1}}^{\dyn}\de_{w_2^{-1}}b\frac{{}^{w_2}\bfg}{\bfg}$. So 
\begin{align*}
Y_\Pi\bullet ((x\bullet f)\cdot f')&=Y_\Pi\bullet \left( (b\de_{w_1}^{\dyn}\de_{w_2}\bullet f_va)\cdot f_{v'}a'\right)\\
&=Y_\Pi\bullet \left((f_{vw_2^{-1}}\cdot {}^{vw_2^{-1}}b\cdot {}^{w_1^{\dyn}}a)\cdot ( f_{v'}a')\right)\\
&=\sum_{w_3, w_4\in W}\de_{w_3}^{\dyn}\de_{w_4}\frac{1}{\bfg}\bullet \left(f_{v'}\de_{v', vw_2^{-1}}^{\Kr}{}^{vw_2^{-1}}b\cdot {}^{w_1^{\dyn}}a\cdot a'\right)\\
&=\sum_{w_3,w_4\in W}f_{v'w_4^{-1}}\frac{\de_{v', vw_2^{-1}}^{\Kr}\cdot {}^{w_3^{\dyn}vw_2^{-1}}b\cdot {}^{(w_3w_1)^{\dyn}}a\cdot {}^{w_3^{\dyn}}a'}{{}^{v'}\bfg}\\
&=(\sum_{w_4\in W}f_{vw_4^{-1}})\sum_{w_3\in W}\frac{\de_{v', vw_2^{-1}}^{\Kr}\cdot{}^{w_3^{\dyn}vw_2^{-1}}b\cdot{}^{(w_3w_1)^{\dyn}}a\cdot {}^{w_3^{\dyn}}a'}{{}^{v'}\bfg}\\
&=\unit\sum_{w_3\in W}\frac{\de_{v', vw_2^{-1}}^{\Kr}\cdot{}^{w_3^{\dyn}vw_2^{-1}}b\cdot{}^{(w_3w_1)^{\dyn}}a\cdot {}^{w_3^{\dyn}}a'}{{}^{v'}\bfg}. 
\end{align*}
On the other hand, we have
\begin{align*}
Y_\Pi\bullet (f\cdot (\iota(x)\bullet f'))&=Y_\Pi \bullet \left(f_va\cdot(\de_{w_1^{-1}}^{\dyn}\de_{w_2^{-1}}b\frac{{}^{w_2}\bfg}{\bfg}\bullet f_{v'}a')\right)\\
&=Y_\Pi\bullet  \left(f_va\cdot [f_{v'w_2}\cdot{}^{v'(w_1^{-1})^{\dyn}}b\cdot \frac{{}^{v'w_2}\bfg}{{}^{v'}\bfg}\cdot{}^{(w_1^{-1})^{\dyn}}a']\right)\\
&=\sum_{w_3, w_4\in W}\de_{w_3}^{\dyn}\de_{w_4}\frac{1}{\bfg}\bullet \left(f_v\de_{v,v'w_2}^{\Kr}\cdot a\cdot{}^{v'(w_1^{-1})^{\dyn}}b\cdot\frac{{}^{v'w_2}\bfg}{{}^{v'}\bfg}\cdot{}^{(w_1^{-1})^{\dyn}}a'\right)\\
&=\sum_{w_3,w_4\in W}f_{vw_4^{-1}}\de_{v,v'w_2}^{\Kr}\cdot\frac{1}{{}^v\bfg}\cdot {}^{w_3^{\dyn}}a\cdot{}^{v'(w_3w_1^{-1})^{\dyn}}b\cdot\frac{{}^{v'w_2}\bfg}{{}^{v'}\bfg}\cdot{}^{(w_3w_1^{-1})^{\dyn}}a'\\
&=\unit\sum_{w_3\in W}\de_{v,v'w_2}^{\Kr}\cdot\frac{1}{{}^v\bfg}\cdot{}^{w_3^{\dyn}}a\cdot{}^{v'(w_3w_1^{-1})^{\dyn}}b\cdot\frac{{}^{v'w_2}\bfg}{{}^{v'}\bfg}\cdot{}^{(w_3w_1^{-1})^{\dyn}}a'.
\end{align*}
Note that $v'=vw_2^{-1}$ if and only if $v'w_2=v$. 
If we let $w_3'=w_3w^{-1}$, then it is straightforward to see that 
\[
Y_\Pi\bullet (f\cdot (\iota(x)\bullet f'))=Y_\Pi\bullet ((x\bullet f)\cdot f'). 
\]
The conclusion is proved. 
\end{proof}

Using  \eqref{eq:invT}, we get the following corollary.
\begin{corollary}\label{cor:adj}We have
\[
Y_\Pi\bullet ((T_i^\la\bullet f)\cdot f')=Y_\Pi\bullet (f\cdot (T_i^{-\la}\bullet f')), \quad f,f'\in \cQ_{ W}^*. 
\]
\end{corollary}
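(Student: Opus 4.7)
The plan is to invoke the previous lemma directly, taking $x = T_i^\la$. The lemma gives the adjoint identity
\[
Y_\Pi\bullet \left((T_i^\la\bullet f)\cdot f'\right)=Y_\Pi\bullet\left(f\cdot(\iota(T_i^\la)\bullet f')\right)
\]
for any $f,f'\in\cQ_W^*$. The only remaining ingredient is to identify $\iota(T_i^\la)$, and this is precisely the content of \eqref{eq:invT}, which asserts $\iota(T_i^\la)=T_i^{-\la}$ (the $i=1$ case of the more general statement $\iota(T_w^\la)=T_{w^{-1}}^{-\la}$). Substituting this identification into the right-hand side yields the corollary.

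There is essentially no obstacle here: the work has already been done in the preceding lemma, and the anti-involution $\iota$ has been constructed precisely so that it swaps $T^\la$ with $T^{-\la}$. One might worry briefly whether the anti-involution's action on the simple generator $T_i^\la$ is well-defined independent of any reduced expression, but since $T_i^\la$ is a single generator no such ambiguity arises, and \eqref{eq:invT} is exactly what is needed. Thus the proof is a single line of substitution, and I would simply present it as such.
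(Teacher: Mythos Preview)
Your proposal is correct and matches the paper's approach exactly: the paper simply states that the corollary follows from the preceding adjoint lemma together with \eqref{eq:invT}, which is precisely the substitution $\iota(T_i^\la)=T_i^{-\la}$ you describe.
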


The following theorem is the algebraic model of the Poincar\'e duality between the elliptic Schubert classes and opposite elliptic Schubert classes:
\begin{theorem}\label{thm:poincare}We have 
\[
Y_\Pi\bullet (\bfE^\la_w \cdot \Ell^{-\la}_v)=\unit\de_{w,v}^{\Kr}\sum_{u_1\in W}u_1^{\dyn}(\frac{\theta_\Pi(\la)}{\theta_\Pi(\hbar-\la)}). 
\]
\end{theorem}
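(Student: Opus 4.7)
The plan is to reduce the computation to Lemma~\ref{lem:duality1} together with the explicit identification \eqref{eq:piEllopp} of $\Ell^{-\la}_v$ with the dual class $(T^{-\la}_v)^*$ scaled by the $\cQ$-element $c_v := \bfg/{}^{(v^{-1}w_0)^{\dyn}}\bfh$. The crucial observation is that $c_v$ is \emph{not} invariant under the dynamical action, so when one splits $Y_\Pi = (\sum_{v_1}\de^{\dyn}_{v_1})\cdot Y'_\Pi$, the scalar $c_v$ passes harmlessly through the $Y'_\Pi$-step but gets twisted by each $\de^{\dyn}_{v_1}$ in the second step, producing exactly the $W^{\dyn}$-orbit sum on the right-hand side.

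First, using the commutative product on $\cQ^*_W$ and \eqref{eq:piEllopp}, I would rewrite
\[\bfE^\la_w \cdot \Ell^{-\la}_v \;=\; \bigl(\bfE^\la_w\cdot (T^{-\la}_v)^*\bigr)\cdot c_v.\]
Then I would observe that $Y'_\Pi\bullet(-)$ commutes with right multiplication by any $c\in \cQ$: since $\sum_{w_2}\de_{w_2}\bullet (f_u\cdot b) = \unit\cdot b$, for any $h = \sum_u f_u b_u$ one has $Y'_\Pi \bullet h = \unit \cdot \sum_u \bigl(b_u/{}^u\bfg\bigr)\in \unit\cdot \cQ$, and from this factorisation $Y'_\Pi\bullet (h\cdot c) = (Y'_\Pi\bullet h)\cdot c$ is immediate. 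Combining with Lemma~\ref{lem:duality1} applied to $h = \bfE^\la_w\cdot (T^{-\la}_v)^*$ gives
\[Y'_\Pi\bullet (\bfE^\la_w\cdot \Ell^{-\la}_v) \;=\; \unit\cdot \frac{\de^{\Kr}_{w,v}}{\bfg}\cdot c_v \;=\; \unit\cdot \frac{\de^{\Kr}_{w,v}}{{}^{(v^{-1}w_0)^{\dyn}}\bfh}.\]

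Next I would apply $\sum_{v_1}\de^{\dyn}_{v_1}$. On any element of the form $\unit\cdot r$ with $r\in \cQ$, the dynamical operator acts by $\de^{\dyn}_{v_1}\bullet(\unit\cdot r) = \unit\cdot {}^{v_1^{\dyn}}r$, and since $\bfh$ depends only on $\la$, the dynamical actions compose as group elements: ${}^{v_1^{\dyn}}\bigl({}^{(v^{-1}w_0)^{\dyn}}\bfh\bigr) = {}^{(v_1 v^{-1}w_0)^{\dyn}}\bfh$. This yields
\[Y_\Pi\bullet (\bfE^\la_w\cdot \Ell^{-\la}_v) \;=\; \unit\cdot \de^{\Kr}_{w,v}\sum_{v_1\in W}\frac{1}{{}^{(v_1 v^{-1}w_0)^{\dyn}}\bfh}.\]
The Kronecker forces $v=w$, and the substitution $u_1 := v_1 w^{-1}w_0$ is a bijection of $W$ converting the sum to $\sum_{u_1\in W}{}^{u_1^{\dyn}}(1/\bfh) = \sum_{u_1} u_1^{\dyn}\bigl(\theta_\Pi(\la)/\theta_\Pi(\hbar-\la)\bigr)$, which is the claimed identity.

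The only genuine subtlety -- and the place where one has to be careful -- is keeping track of the fact that $c_v$ commutes with the $Y'_\Pi$-action but not with the dynamical sum; after that, everything reduces to the $W^{\dyn}$-action on $1/\bfh$ and an indexing change. No ingredient beyond Lemma~\ref{lem:duality1}, the formula~\eqref{eq:piEllopp}, and the $W^{\dyn}$-invariance of $\bfg$ is required.
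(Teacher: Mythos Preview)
Your proposal is correct and coincides essentially line-for-line with the paper's Proof~1: you invoke \eqref{eq:piEllopp} to rewrite $\Ell^{-\la}_v$ as $(T^{-\la}_v)^*$ times the scalar $c_v$, use the $\cQ$-linearity of the $Y'_\Pi$-action together with Lemma~\ref{lem:duality1}, and then apply the dynamical sum $\sum_{v_1}\de^{\dyn}_{v_1}$ with a final reindexing---exactly the structure of the paper's first proof. The paper also records a second, independent argument (Proof~2) based on the adjointness in Corollary~\ref{cor:adj}, which you do not use, but your route is the paper's primary one.
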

We provide two proofs of this theorem, which are not entirely independent but have different flavors.
\begin{proof}[Proof 1]
We have
\begin{align*}
Y_\Pi\bullet (\bfE^\la_w\cdot \Ell^{-\la}_v)&\overset{\eqref{eq:piEllopp} }=\sum_{u_1\in W}\de_{u_1}^{\dyn}Y'_\Pi\bullet \left(\bfE^\la_w\cdot (T^{-\la}_v)^*\frac{\bfg}{{}^{(w^{-1}w_0)^{\dyn}}\bfh}\right) \\
&=\sum_{u_1\in W}\de_{u_1}^{\dyn}\bullet [Y'_{\Pi}\bullet \left(\bfE_w^\la\cdot (T^{-\la}_{v})^*\frac{\bfg}{{}^{(w^{-1}w_0)^\dyn}\bfh}\right)]\\
&\overset{(*)}=\sum_{u_1\in W}\de_{u_1}^{\dyn}\bullet [\unit\de_{w,v}^{\Kr}\frac{\bfg}{{}^{(w^{-1}w_0)^\dyn}\bfh}]\\
&=\unit\de_{w,v}^{\Kr}\bfg\sum_{u_1}\frac{1}{{}^{(u_1w^{-1}w_0)^\dyn}\bfh}.
\end{align*}
Here equality $(*)$ follows from \eqref{eq:piEllopp} and the fact that the action of $Y_\Pi'$ is $\cQ$-linear. 
Rewriting $u_1w^{-1}$ as $u_1'$, we obtain the result.
\end{proof}
\begin{proof}[Proof 2]
From Corollary \ref{cor:adj} we know that 
\begin{align*}
Y_\Pi\bullet (\bfE^\la_w\cdot \Ell^{-\la}_v)&=Y_\Pi \bullet [(T_{w^{-1}}^\la\bullet f_e)\cdot (T^{-\la}_{v^{-1}w_0}\bullet f_{w_0})]=Y_\Pi\bullet [f_e\cdot (T^{-\la}_{wv^{-1}w_0}\bullet f_{w_0})].
\end{align*}
Since $f_ef_u=\de_{e,u}^{\Kr}$, and by using the formula of the $\bullet$- action \eqref{eq:bullet}, we just need to look at the coefficient of $\de_{w_0}$ in the expansion of $T^{-\la}_{wv^{-1}w_0}=\sum_{u}a^{-\la}_{wv^{-1}w_0, u}\de^{\dyn}_{wv^{-1}w_0}\de_u$, which is 0 if $w\neq v$, and is $ a^{-\la}_{w_0,w_0}\de_{w_0}^{\dyn}=\frac{\bfg}{{}^{w_0^\dyn}\bfh}\de_{w_0}^{\dyn}$ if $w=v$. So if $w\neq v$, we see that $Y_\Pi(\bfE_w^\la\cdot \Ell^{-\la}_v)=0$. If $w=v$, we have 
\begin{align*}
Y_\Pi(\bfE_w^{\la}\cdot \Ell_w^{-\la})&=Y_\Pi\bullet (f_e\cdot (a^{-\la}_{w_0,w_0}\de_{w_0}^{\dyn}\de_{w_0}\bullet f_{w_0}))\\
&=Y_\Pi\bullet (f_e\frac{\bfg}{{}^{w_0^\dyn}\bfh})\\
&=(\sum_{u\in W}\de_u\sum_{u_1\in W}\de_{u_1}^{\dyn})\bullet f_e \frac{1}{{}^{w_0^\dyn}\bfh}\\
&=\sum_{u\in W}f_{u^{-1}}\sum_{u_1\in W}{}^{u_1^{\dyn}}(\frac{1}{{}^{w_0^\dyn}\bfh})\\
&=\unit\sum_{u_1\in W}\frac{1}{{}^{u_1^{\dyn}}\bfh}. 
\end{align*}
\end{proof}

\bibliographystyle{amsplain}

\end{document}